\newtheorem{theorem}{Theorem}
\newtheorem{lemma}{Lemma}
\newtheorem{prop}{Proposition}
\newtheorem*{theorem*}{Theorem}
\theoremstyle{definition}
\newtheorem*{nota}{Notation}
\theoremstyle{remark}
\newtheorem{rem}{Remark}
\newtheorem{conj}{Conjecture}
\def\pr{{\partial}}
\def\V{\Vert}
\begin{document}

\title{Ill-posedness for the incompressible Euler equations in critical Sobolev spaces}

\author{Tarek Mohamed Elgindi and In-Jee Jeong}

\maketitle

\begin{abstract}
	For the $2D$ Euler equation in vorticity formulation, we construct localized smooth solutions whose critical Sobolev norms become large in a short period of time, and solutions which initially belong to $L^\infty \cap H^1$ but escapes $H^1$ immediately for $t>0$. Our main observation is that a localized chunk of vorticity bounded in $L^\infty \cap H^1$ with odd-odd symmetry is able to generate a hyperbolic flow with large velocity gradient at least for a short period of time, which stretches the vorticity gradient.
\end{abstract}

\section{Introduction and the Main Statement}

We consider the vorticity formulation of $2D$ incompressible Euler equation on the torus $\mathbb{T}^2 = [-1,1)^2$: \begin{equation}\label{eq:Euler}
\pr_t \omega + u \cdot \nabla \omega = 0,
\end{equation}
where the velocity $u$ is determined from $\omega$ by the Biot-Savart law \begin{equation}\label{eq:BS}
u(t,x) = \frac{1}{2\pi} \sum_{n \in \mathbb{Z}^2} \int_{[-1,1)^2} \frac{(x-y-2n)^\perp}{|x-y-2n|^2} \omega(t,y)dy,
\end{equation}
with the convention $(v^1,v^2)^\perp = (-v^2,v^1)$. It is well known that \eqref{eq:Euler} is well-posed in $C([0,\infty);H^s(\mathbb{T}^2))$ for $s > 1$ and in $C^\infty([0,\infty)\times \mathbb{T}^2)$ when the initial data is smooth. If $\omega(t,\cdot) \in H^s(\mathbb{T}^2)$ for $s>1$ then $u(t,\cdot)$ is Lipschitz in $x$ and the following ordinary differential equation
 \begin{equation}\label{eq:flowdef}
\begin{split}
\frac{d}{dt} \Phi(t,x) &= u(t,\Phi(t,x)) \quad \mbox{and}\quad \Phi(0,x) = x
\end{split}
\end{equation}
defines the area-preserving flow maps $\Phi(t,\cdot):\mathbb{T}^2 \rightarrow \mathbb{T}^2$ along which the vorticity is transported; \begin{equation}\label{eq:vt}
\omega(t,\Phi(t,x)) = \omega_0(x).
\end{equation} 

In this note, we will provide a simple proof of the \textit{strong illposedness} of the Euler equation \eqref{eq:Euler} in the critical Sobolev space $H^1(\mathbb{T}^2)$, which was obtained recently by Bourgain and Li \cite{MR3359050}. The following result shows that so-called \textit{norm inflation} occurs in $H^1$ for smooth initial data.

\begin{theorem}\label{thm:growth}
	For any $\epsilon > 0$, there exists an initial data $\omega_0 \in C^\infty(\mathbb{T}^2)$ and a time moment $0 < t < \epsilon$ such that \begin{equation*}
		\Vert \omega_0 \Vert_{H^1 \cap L^\infty} < \epsilon, \quad \mathrm{supp}(\omega_0) \subset B_0(\epsilon) \qquad\mbox{and}\qquad \Vert\omega(t,\cdot)\Vert_{H^1} > \epsilon^{-1},
		\end{equation*}
		where $B_0(\epsilon)$ is the ball of radius $\epsilon$ around the origin.
\end{theorem}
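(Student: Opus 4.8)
\noindent\emph{Strategy.} I would use the observation flagged in the abstract: an odd--odd chunk of vorticity generates an essentially explicit hyperbolic flow near the origin. Write $u=\nabla^\perp\psi$, $\D\psi=\omega$; for odd--odd $\omega$ the stream function is also odd--odd, so in the Taylor expansion of $\psi$ about $0$ the constant and linear terms vanish and $\nabla^2\psi(0)$ is off--diagonal, whence $\psi(t,y)\approx-c(t)\,y_1y_2$ near $0$ with $c(t)=\tfrac4\pi\int_{(0,1)^2}\tfrac{y_1y_2}{|y|^4}\omega(t,y)\,dy$, up to an error $\lesssim\Vert\omega\Vert_{L^\infty}$ from the Taylor remainder and the periodisation; equivalently $u(t,y)\approx c(t)(y_1,-y_2)$. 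Moreover $\partial_2u^1$ and $\partial_1u^2$ are odd--odd hence vanish at $0$, so $Du(t,z)\approx\mathrm{diag}\big(\sigma(t,z),-\sigma(t,z)\big)$ for $z$ near $0$, where the strain felt at distance $|z|$ from the origin is $\sigma(t,z)=\tfrac4\pi\int_{(0,1)^2,\,|y|>|z|}\tfrac{y_1y_2}{|y|^4}\omega(t,y)\,dy+O(\Vert\omega\Vert_{L^\infty})$. Thus the flow map satisfies $D\Phi(t,y)\approx\mathrm{diag}\big(e^{g(t,y)},e^{-g(t,y)}\big)$ with $g(t,y)=\int_0^t\sigma(s,\Phi(s,y))\,ds$, and — differentiating $\omega(t,\Phi(t,y))=\omega_0(y)$ and using $\det D\Phi=1$ —
\[
\Vert\omega(t)\Vert_{H^1}^2\ \ge\ \Vert\nabla\omega(t)\Vert_{L^2}^2=\int\big|(D\Phi(t,y))^{-T}\nabla\omega_0(y)\big|^2dy\ \gtrsim\ \int e^{2g(t,y)}\,|\partial_2\omega_0(y)|^2\,dy .
\]
The aim of the proof is to make $g$ of size $\gg\log(1/\epsilon)$ on a set of substantial measure by some time $t<\epsilon$, while $\Vert\omega_0\Vert_{L^\infty\cap H^1}<\epsilon$.

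\smallskip
\noindent\emph{The initial data.} I would take, for parameters $0<\rho\ll\delta=\epsilon/2$ and amplitude $A>0$,
\[
\omega_0(y)=A\,\chi(|y|)\,\frac{2y_1y_2}{|y|^2}\qquad\text{with }\chi\in C^\infty_c((\rho,\delta)),\ 0\le\chi\le1,\ \chi\equiv1\text{ on }[2\rho,\delta/2],\ |\chi'|\lesssim 1/r .
\]
Then $\omega_0\in C^\infty(\mathbb T^2)$ is odd--odd, $\mathrm{supp}\,\omega_0\subset B_0(\epsilon)$, $\Vert\omega_0\Vert_{L^\infty}\le A$, and (the angular derivative being the dominant contribution) $\Vert\omega_0\Vert_{H^1}^2\lesssim A^2\log(\delta/\rho)$. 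A direct computation with $\sigma$ above gives that the strain at scale $r$ is $\gtrsim A\log(\delta/r)$ for $\rho\le r\le\delta/2$; in particular $\G_0:=A\log(\delta/\rho)$ is, up to a constant, the strain near scale $\rho$. Choosing $A\asymp\epsilon/\sqrt{\log(\delta/\rho)}$ makes $\Vert\omega_0\Vert_{L^\infty\cap H^1}<\epsilon$, while $\G_0\asymp\epsilon\sqrt{\log(\delta/\rho)}$ is as large as we please once $\rho$ is small; I will need $\G_0\ge C\epsilon^{-1}\log(1/\epsilon)$, which forces $\log(\delta/\rho)\gtrsim\epsilon^{-4}(\log(1/\epsilon))^2$ (so $\rho$ is chosen doubly--exponentially small, but that is harmless).

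\smallskip
\noindent\emph{Dynamics and conclusion.} Since $\omega_0\ge0$ on the first quadrant $Q$ and $Q$ is invariant under the symmetric flow, $\omega(t,\cdot)\ge0$ on $Q$ and $\sigma(t,\cdot)\ge0$ for all $t$. Near the origin a Lagrangian trajectory through a generic point (angle bounded away from the axes and from $\theta=\pi/4$) is pushed \emph{outward}: with $v(t)=\log(\delta/r(t))$ and $\sigma\approx Av$ one gets $\dot v\approx-Av$, so $v(t)\approx v_0e^{-At}\ge v_0/2$ for $t\le\epsilon/2$ (as $A\epsilon\ll 1$), hence $g(t,y)\gtrsim A\,t\,\log(\delta/r_0)$; such a trajectory stays in the hyperbolic region throughout $[0,\epsilon/2]$ provided $r_0\lesssim\delta e^{-\G_0\epsilon/2}$, which is still nearly the entire radial range since $\G_0\epsilon/2$ is only polynomial in $\log(1/\epsilon)$. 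On the set $G$ of such starting points, $|\partial_2\omega_0|\gtrsim A/r_0$ with $\nabla\omega_0$ roughly aligned with $e_2$, while $e^{g(t,y)}$ lower bounds the top singular value of $(D\Phi(t,y))^{-T}$; feeding this into the displayed inequality at $t=\epsilon/2$,
\[
\Vert\nabla\omega(\tfrac\epsilon2)\Vert_{L^2}^2\ \gtrsim\ \int_G e^{2g}\,\frac{A^2}{r_0^2}\,dy\ \gtrsim\ A^2\!\!\int_{\rho}^{\delta e^{-\G_0\epsilon/2}}\!\!\Big(\frac{\delta}{r_0}\Big)^{cA\epsilon}\frac{dr_0}{r_0}\ \gtrsim\ \frac{A}{\epsilon}\big(e^{\,c\G_0\epsilon}-2\big)
\]
for an absolute $c>0$. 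Fixing $\rho$ so small that $c\,\G_0\epsilon\ge5\log(1/\epsilon)$, the right side is $\gtrsim A\epsilon^{-6}\asymp\epsilon^{-3}/\log(1/\epsilon)>\epsilon^{-2}$, so $\Vert\omega(\tfrac\epsilon2)\Vert_{H^1}>\epsilon^{-1}$ and $\tfrac\epsilon2<\epsilon$, which is the assertion. (Consistently $\Vert\omega(t)\Vert_{L^\infty}=\Vert\omega_0\Vert_{L^\infty}\le A<\epsilon$ for all $t$: only the gradient inflates, which is exactly why the phenomenon is special to the critical exponent.)

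\smallskip
\noindent\emph{The main obstacle.} The genuinely delicate step is the persistence used above: on $[0,\epsilon/2]$, which is \emph{long} compared with the turnover time $\G_0^{-1}\ll\epsilon$, the strain at scale $r$ must stay $\gtrsim A\log(\delta/r)$ and $D\Phi$ must stay within a constant factor of $\mathrm{diag}(e^g,e^{-g})$ on the hyperbolic region. Heuristically this holds because $A\epsilon\ll1$, so every trajectory moves in $\log$-radius by at most $\G_0\epsilon\ll\log(\delta/\rho)$, leaving the radial distribution of $\omega$ — as seen by the integral defining $\sigma$ — essentially unchanged; to prove it I would run a continuity/bootstrap argument: assume the bounds on $[0,t]$, use them to control the Lagrangian transport of the vorticity, re-estimate $\sigma$ and the error terms in the key lemma (the $O(\Vert\omega\Vert_{L^\infty})=O(A)$ errors are $\ll\G_0$, hence negligible on the scale $\epsilon$), and close with room to spare so the interval extends. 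The remaining tasks — controlling the deviation of $D\Phi$ from the explicit hyperbolic model near $|y|\sim\delta$, and justifying the Lagrangian manipulations (legitimate since $\omega_0\in C^\infty$ yields a global smooth solution by the quoted well-posedness) — are routine.
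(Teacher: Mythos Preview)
Your scheme is close in spirit to the paper's, but the step you label ``the main obstacle'' is a genuine gap, and your proposed bootstrap does not close. You argue that the strain persists because ``every trajectory moves in $\log$-radius by at most $\Gamma_0\epsilon\ll\log(\delta/\rho)$, leaving the radial distribution of $\omega$ essentially unchanged.'' But the Key Lemma integral $\int\tfrac{y_1y_2}{|y|^4}\omega\,dy=\tfrac12\int\tfrac{dr}{r}\int\sin(2\theta)\,\omega\,d\theta$ is governed by the \emph{angular} distribution, not the radial one, and the very hyperbolic flow you are exploiting squeezes that distribution toward an axis. At radius $r$ the angular compression factor after time $t$ is roughly $\exp\big(\int_0^t\sigma(s,r)\,ds\big)$; with your parameters, $A\epsilon\,\log(\delta/r)\gg 1$ throughout most of the range $(\rho,\delta)$, so the bulk of the vorticity is driven to angles where $\sin(2\theta)$ is exponentially small, and the integral defining $\sigma$ collapses. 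A back-of-the-envelope self-consistent computation (coupling $\partial_t g=\int_0^v e^{-2g}\,dv'$) gives $g\sim C\log(Avt)$ rather than $g\sim Avt$, which is far too weak to produce $e^{2g}\gtrsim\epsilon^{-2}$. The paper is explicit that it ``could not exclude the possibility of our initial vorticity chunk getting `squeezed' in the angular direction,'' and this is exactly the failure mode of your bootstrap.

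The paper's resolution is structurally different from what you propose. First, it works on the much shorter time scale $t^*=\tau^*\ln\ln N/\ln N$ (not $O(\epsilon)$), on which the flow-map estimate \eqref{eq:flow} bounds all displacements in $\log r$ by a factor $(\ln N)^{C\tau^*}$. Second, and crucially, it replaces your bootstrap by a \emph{dichotomy}: either (\textbf{Case I}) significant angular squeezing has already occurred on an annulus $N^{-5/6}\le r\le N^{-4/6}$ --- in which case the squeezing itself forces $\int|\partial_\theta\omega|^2\,d\theta\,\tfrac{dr}{r}\gtrsim M\ln N$, giving the growth directly --- or (\textbf{Case II}) it has not, in which case the Key Lemma integral retains a lower bound $\gtrsim c_M\ln N$ and one can track a short diagonal segment $S$ near scale $N^{-1}$ and show it is driven toward the $x_2$-axis, again yielding growth. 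Finally the small-norm statement comes from the amplitude rescaling $\omega^\lambda=\lambda\omega(\lambda t,\cdot)$, not from making $A$ small at the outset. (There is also a sign slip in your setup: with $\omega\ge 0$ on the first quadrant the Key Lemma gives $u\approx c(-y_1,y_2)$, so $D\Phi\approx\mathrm{diag}(e^{-g},e^{g})$ and it is $|\partial_1\omega_0|$ that gets amplified; this is cosmetic, but the angular-squeezing issue is not.)
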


Next, we show that a localized solution which is initially small in $L^\infty \cap H^1$ can  immediately escape $H^1$ for $t > 0$, which have also appeared in \cite{MR3359050}:

\begin{theorem}\label{thm:pat}
	For any $s,p$ such that $sp = 2$ and $1 \le s < 6/5$, there is $\omega_0 \in L^\infty(\mathbb{T}^2) \cap W^{s,p}(\mathbb{T}^2)$ which is $C^\infty$ away from the origin that for any $0 < t_0 \le 1$, 
	\begin{equation*} \mathrm{ess}\mbox{-}\mathrm{sup}_{0<t\le t_0} \Vert \omega(t,\cdot)\Vert_{W^{s,p}} \ge \mathrm{ess}\mbox{-}\mathrm{sup}_{0<t\le t_0} \Vert \omega(t,\cdot)\Vert_{H^1} = +\infty.
	\end{equation*}
\end{theorem}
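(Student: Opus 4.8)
The plan begins with two trivial reductions. First, the factor $\Vert\omega(t,\cdot)\Vert_{W^{s,p}}\ge\Vert\omega(t,\cdot)\Vert_{H^1}$ in the statement is, up to an inessential constant (inessential since the right-hand side will be shown to be $+\infty$), just the critical Sobolev embedding $W^{s,p}(\mathbb{T}^2)\hookrightarrow H^1(\mathbb{T}^2)$, valid precisely because $sp=2$ and $s\ge 1$. So it suffices to construct a single $\omega_0\in L^\infty(\mathbb{T}^2)\cap W^{s,p}(\mathbb{T}^2)$, smooth on $\mathbb{T}^2\setminus\{0\}$, whose unique Yudovich solution $\omega$ satisfies $\mathrm{ess}\mbox{-}\mathrm{sup}_{0<t\le t_0}\Vert\omega(t,\cdot)\Vert_{H^1}=+\infty$ for every $t_0\in(0,1]$; and for this it is enough to exhibit a sequence $\tau_n\downarrow 0$ with $\Vert\omega(\tau_n,\cdot)\Vert_{H^1}\to\infty$.

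To build $\omega_0$ I would superpose infinitely many translated copies of the localized norm-inflation bubbles supplied by Theorem~\ref{thm:growth}, accumulating at the origin. Fix $\epsilon_n\downarrow 0$ (to be chosen extremely small) and apply Theorem~\ref{thm:growth} to get $\bar\omega_0^{(n)}\in C^\infty$ with $\mathrm{supp}\,\bar\omega_0^{(n)}\subset B_0(\epsilon_n)$, $\Vert\bar\omega_0^{(n)}\Vert_{H^1\cap L^\infty}<\epsilon_n$, and $0<\bar t_n<\epsilon_n$ with $\Vert\bar\omega^{(n)}(\bar t_n,\cdot)\Vert_{H^1}>\epsilon_n^{-1}$; since $L^q$ norms are transported by \eqref{eq:Euler}, $\Vert\bar\omega^{(n)}(t,\cdot)\Vert_{L^2}\equiv\Vert\bar\omega_0^{(n)}\Vert_{L^2}<\epsilon_n$, so the blow-up is in the homogeneous seminorm: $\Vert\bar\omega^{(n)}(\bar t_n,\cdot)\Vert_{\dot H^1}>\tfrac12\epsilon_n^{-1}$. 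Choose pairwise disjoint balls $B(c_n,\epsilon_n)$ with $c_n\to 0$, set $\omega_0^{(n)}(x):=\mu_n\,\bar\omega_0^{(n)}(x-c_n)$ for amplitudes $\mu_n$ to be chosen, and put $\omega_0:=\sum_n\omega_0^{(n)}$. Because translation and the amplitude scaling $\omega(t,x)\mapsto\mu\,\omega(\mu t,x)$ are exact symmetries of \eqref{eq:Euler} on $\mathbb{T}^2$, the Euler evolution of the single piece $\omega_0^{(n)}$ is exactly $\mu_n\,\bar\omega^{(n)}(\mu_n t,\,x-c_n)$, which at time $\tau_n:=\bar t_n/\mu_n$ has $\dot H^1$ seminorm $>\tfrac12\mu_n\epsilon_n^{-1}$. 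I then fix the parameters so that: (i) $\Vert\omega_0\Vert_{L^\infty}=\sup_n\mu_n\Vert\bar\omega_0^{(n)}\Vert_{L^\infty}<\infty$; (ii) $\mu_n/\epsilon_n\to\infty$, which forces $\tau_n\downarrow 0$ and $\mu_n\epsilon_n^{-1}\to\infty$; (iii) $\omega_0\in W^{s,p}$, which by translation invariance of the seminorm reduces to $\sum_n\mu_n^p\Vert\bar\omega_0^{(n)}\Vert_{W^{s,p}}^p<\infty$ plus summable Gagliardo cross-terms from the well-separated supports --- compatible with (ii) because the growth of $\Vert\bar\omega_0^{(n)}\Vert_{W^{s,p}}$ coming from the construction in Theorem~\ref{thm:growth} is slow enough exactly when $s<6/5$, which is where that hypothesis enters; (iv) the pieces' supports stay pairwise separated on $[0,\sup_n\tau_n]$ and the mutual velocities they induce are uniformly small, possible since $\Vert\omega_0^{(m)}\Vert_{L^1}\lesssim\mu_m\epsilon_m^3$ can be made as tiny as wanted. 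By construction $\omega_0\in C^\infty(\mathbb{T}^2\setminus\{0\})$ because only finitely many pieces meet the complement of any neighborhood of the origin.

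Since $\omega_0\in L^\infty$, \eqref{eq:Euler} has a unique global Yudovich solution $\omega$, with $\Vert\omega(t,\cdot)\Vert_{L^q}\equiv\Vert\omega_0\Vert_{L^q}$ and flow map $\Phi(t,\cdot)$ an area-preserving H\"older homeomorphism. Disjointness (hence separation) of the supports is preserved by $\Phi$, so on $[0,\sup_n\tau_n]$ one has $\omega(t,\cdot)=\sum_n\omega^{[n]}(t,\cdot)$ with $\omega^{[n]}:=\omega_0^{(n)}\circ\Phi(t,\cdot)^{-1}$ supported in pairwise separated sets, whence $\Vert\omega(t,\cdot)\Vert_{\dot H^1}^2=\sum_n\Vert\omega^{[n]}(t,\cdot)\Vert_{\dot H^1}^2\ge\Vert\omega^{[n]}(t,\cdot)\Vert_{\dot H^1}^2$ for each $n$. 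Each $\omega^{[n]}$ evolves by transport along its own Biot--Savart velocity plus the external drift $v_n$ induced by $\sum_{m\ne n}\omega^{[m]}$, which near $\mathrm{supp}\,\omega^{[n]}$ is smooth with $\Vert v_n\Vert_{L^\infty}$ and $\Vert\nabla v_n\Vert_{L^\infty}$ uniformly small. The decisive step is then to show the $\dot H^1$ inflation of the $n$-th bubble persists under this perturbation, i.e.\ $\Vert\omega^{[n]}(\tau_n,\cdot)\Vert_{\dot H^1}\gtrsim\mu_n\epsilon_n^{-1}$, the lower-order losses being absorbed by the large right-hand side. Granting this, $\Vert\omega(\tau_n,\cdot)\Vert_{H^1}\ge\Vert\omega^{[n]}(\tau_n,\cdot)\Vert_{\dot H^1}\to\infty$ with $\tau_n\downarrow 0$, which is the claim.

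I expect this last decoupling step to be the main obstacle. A crude Gr\"onwall comparison of $\omega^{[n]}$ with the isolated evolution is too lossy: the inflation unavoidably involves large velocity gradients over the short interval $[0,\tau_n]$, so $\exp\big(\int_0^{\tau_n}\Vert\nabla u(t,\cdot)\Vert_{L^\infty}\,dt\big)$ is a sizeable power of $\epsilon_n^{-1}$ and would swamp the small-but-not-infinitesimal drift $v_n$. One must instead use the geometry behind Theorem~\ref{thm:growth}: the stretching of $\nabla\omega^{[n]}$ is generated by the hyperbolic flow the bubble produces about itself (the ``localized chunk $\Rightarrow$ hyperbolic flow with large velocity gradient $\Rightarrow$ stretched vorticity gradient'' of the abstract), and a small smooth drift perturbs that self-generated hyperbolic flow only mildly. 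Concretely I would reprove the mechanism of Theorem~\ref{thm:growth} with the extra smooth advection $v_n$ present --- equivalently, isolate from its proof a robust, quantitative ``core lemma'': a localized $L^\infty\cap H^1$ chunk with odd-odd symmetry, subjected to an arbitrary sufficiently small smooth ambient velocity, still generates the hyperbolic flow and the attendant gradient growth --- and apply it to each bubble. The remaining ingredients (the Sobolev embedding, the scaling bookkeeping, membership of $\omega_0$ in $L^\infty\cap W^{s,p}$ and its smoothness off the origin, and the orthogonality $\Vert\omega\Vert_{\dot H^1}^2=\sum_n\Vert\omega^{[n]}\Vert_{\dot H^1}^2$) are routine once the parameter hierarchy $\epsilon_n\ll\mu_n\le\sup_m\mu_m<\infty$ is fixed with enough room to spare.
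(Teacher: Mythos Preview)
Your approach is genuinely different from the paper's, and the paper's authors in fact explicitly considered and set aside your strategy. You propose a Bourgain--Li style superposition of infinitely many translated copies of the Theorem~\ref{thm:growth} bubbles, accumulating at the origin. The paper instead uses a \emph{single} explicit initial datum,
\[
\omega_0(r,\theta) = \Big(\ln\tfrac{1}{r}\Big)^{-\alpha}\psi(\theta)\xi(r),\qquad \tfrac12<\alpha<\tfrac35,
\]
which is odd-odd about the origin and can be viewed as a ``continuum'' of the Theorem~\ref{thm:growth} data across all length scales simultaneously. The analysis then reruns the two-case argument of Theorem~\ref{thm:growth} at each large scale parameter $N$, with the time scale $t^* = \tau^*\ln\ln N/(\ln N)^{1-5\alpha/3}$; the restriction $\alpha<3/5$ (equivalently $s<6/5$) is exactly what forces $t^*\to 0$, and $\alpha p>1$ is what puts $\omega_0$ in $W^{s,p}$. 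The paper remarks just after the theorem statement that patching the Theorem~\ref{thm:growth} solutions ``seems to require a rather involved analysis,'' which is why they chose the explicit datum.

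Your proposal has a real gap precisely at the step you flag as ``the main obstacle.'' The inflation mechanism in Theorem~\ref{thm:growth} rests on the Kiselev--\v{S}ver\'ak Key Lemma (Lemma~\ref{lem:Zlatos}), which requires the \emph{full} vorticity to be odd in both variables about the relevant center. Once you translate the $n$-th bubble to $c_n$, the other bubbles destroy odd-odd symmetry about $c_n$, and the drift $v_n$ they generate is not odd-odd either; so the velocity representation \eqref{eq:Zlatos} is no longer available for $\omega^{[n]}$ as written. Your suggested ``robust core lemma'' --- inflation survives a small smooth ambient drift --- is plausible but is not proved here, and is not a small perturbation argument: as you note, a Gr\"onwall comparison is too lossy, and a genuine reproof of the mechanism without the symmetry would be needed. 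Separately, your explanation of where $s<6/5$ enters (``the growth of $\Vert\bar\omega_0^{(n)}\Vert_{W^{s,p}}$ \dots is slow enough exactly when $s<6/5$'') is not correct: for the Theorem~\ref{thm:growth} data one computes $\Vert\bar\omega_0^{(N)}\Vert_{\dot W^{s,p}}^p \approx \int_{N^{-1}}^{N^{-1/2}} r^{1-sp}\,dr \approx \ln N$ for \emph{every} $(s,p)$ with $sp=2$, so the static $W^{s,p}$ bookkeeping does not single out $s<6/5$. In the paper's approach that threshold is dynamical, coming from the exponent $1-5\alpha/3$ in the time scale.
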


Therefore, for $1 \le s < 6/5$ and $sp = 2$, there are initial data in $W^{s,p}$ which escapes not only $W^{s,p}$ but also $H^1$ for $t > 0$. Actually our proof of Theorem \ref{thm:pat} can be modified a little bit to prove  ill-posedness statements in $W^{s,p}$ for all values of $1 \le s <2$: there exists $\omega_0 \in L^\infty(\mathbb{T}^2) \cap W^{s,p}(\mathbb{T}^2)$ whose $W^{s,p}$-norm becomes instantaneously infinite for $t > 0$.  

In the work of Bourgain-Li  \cite{MR3359050}, the existence of localized initial data which escapes $H^1$ was obtained by carefully ``patching'' together an infinite sequence $\{ \omega_0^{(n)} \}_{n=1}^\infty$ of $C^\infty$ data whose support becomes smaller but grows in $H^1$ with a larger rate in a shorter period of time as $n \rightarrow \infty$. It is possible that the $C^\infty$ solutions that we construct in Theorem \ref{thm:growth} can be patched together to obtain the desired statement as well. However, this seems to require a rather involved analysis, and we have chosen to establish Theorem \ref{thm:pat} via exhibiting a simple explicit initial data in $L^\infty \cap H^1$, see \eqref{ini:continumm}. 

The problem of well-posedness will not be an issue in the above statements as there is a unique, global-in-time solution of the extended system \eqref{eq:BS}--\eqref{eq:vt} in $L^\infty([0,\infty)\times L^\infty(\mathbb{T}^2))$ (so-called Yudovich solutions) for $\omega_0$ just in $L^\infty(\mathbb{T}^2)$, even though in this case, $u(t,\cdot)$ is only log-Lipschitz in general. A simple proof of this fact may be found in \cite{MR1245492}, for instance.

The space $H^1$ is called \textit{critical} since we barely cannot close the standard energy estimate \begin{equation*}
\frac{d}{dt} \Vert \nabla \omega \Vert_{L^2(\mathbb{T}^2)} \le \Vert \nabla u \Vert_{L^\infty(\mathbb{T}^2)} \Vert \nabla \omega \Vert_{L^2(\mathbb{T}^2)},
\end{equation*}
as $\omega \in H^1(\mathbb{T}^2)$ or even  $\omega \in H^1(\mathbb{T}^2) \cap L^\infty(\mathbb{T}^2)$ does not guarantee that $u$ is Lipschitz. This failure of Lipschitz regularity is at the heart of the possibility of rapid growth of vorticity gradient. It is explicit in the Bahouri-Chemin example \cite{BC}: Take $\omega(x) = 1$ on $[0,1]^2$ and extend it to $[-1,1]^2$ as an odd function in both variables. This defines a stationary solution of $2D$ Euler in the sense of equations \eqref{eq:BS}--\eqref{eq:vt}, and the flow near the origin is ``hyperbolic'' in the following specific sense: for $0<x_1<x_2$ small, it can be computed that for some absolute constant $c > 0$ (see Denissov \cite{D}) \begin{equation}\label{eq:BC}
\begin{split}
u(x_1,x_2) = c \left( -x_1 \left( \ln \frac{1}{x_2} +r_1(x_1,x_2) \right) , x_2 \left(  \ln \frac{1}{x_2} + r_2(x_1,x_2) \right) \right)
\end{split}
\end{equation} with some smooth functions $r_1, r_2$. 

Certain perturbations of this stationary solution were utilized in the works of Denissov \cite{D}, Kiselev-{\v{S}}ver{\'a}k \cite{MR3245016}, and Zlato{\v{s}} \cite{Z} (in chronological order) to obtain growth of vorticity gradient in the maximum norm $L^\infty$. The growth rates of $\Vert \nabla\omega\Vert_{L^\infty(\mathbb{T}^2)}$  obtained in \cite{D} and \cite{Z} were double exponential  for arbitary long but finite time and exponential for all time, respectively. The groundbreaking work \cite{MR3245016} settled the possibility of double exponential growth of $\Vert \nabla\omega\Vert_{L^\infty(D)}$ for all time, when the domain is a disc. The ``Key Lemma'' of Kiselev and {\v{S}}ver{\'a}k (which was also utilized in \cite{Z}) is an essential tool in our arguments as well (see below Lemma \ref{lem:Zlatos}). 

While our basic strategy to obtain growth of $\omega$ in $H^1(\mathbb{T}^2)$ is similar to that of the aforementioned works, there are a number of notable differences in our setting. 

First, while the idea of ``linearizing'' around the Bahouri-Chemin stationary solution makes sense when considering only bounded vorticities, this solution does not belong to $H^1(\mathbb{T}^2)$. Hence, we needed to consider a different type of ``background'' vorticity, and our choice was to take a suitably localized version of the following function: \begin{equation*}
\begin{split}
\omega_0(x_1,x_2) = \Delta \left( x_1x_2 \left| \ln |x| \right|^\alpha  \right)
\end{split}
\end{equation*} with $0<\alpha<1/2$. The advantage of this initial vorticity is that it belongs to $H^1\cap L^\infty$ and the corresponding velocity $u_0$ satisfies $\nabla u_0 \notin L^\infty$.

Second, since we want localized solutions, it is not clear if the specific hyperbolic picture of the type \eqref{eq:BC} near the origin will be sustained, even for a very short periodic of time. In view of this, our strategy is to take an initial vorticity which extends over two different length scales $N^{-1}$ and $N^{-1/2}$, and to show that vorticity outside the $O(N^{-1})$-region, in the special time scale of $\ln\ln N/\ln N$, is sufficient to generate a hyperbolic flow which stretches the vorticity gradient on the $O(N^{-1})$-region.

Here, a caveat is that we could not exclude the possibility of our initial vorticity chunk getting ``squeezed'' in the angular direction even earlier than the scale $\ln\ln N/\ln N$, in which case we do not have a good lower bound on $|\nabla u|$. Hence our actual proof is based on a contradiction argument. This difficulty vanishes when the domain has a boundary: see Remark \ref{rem:disc}.

Closing the introduction, let us mention that similar ill-posedness statements were recently established for the integer based $C^k$ spaces with $k\ge1$ of the velocity field $u$, independently in the works of Elgindi-Masmoudi \cite{EM} and Bourgain-Li \cite{MR3320889}. We refer the interested readers to the works \cite{MR3359050}, \cite{MR3320889}, and \cite{EM} for an extensive list of references on the problem of well-posedness of the Euler equations.

\begin{nota}
	Let us use the notation $|f|_p = \Vert f \Vert_{L^p(\mathbb{T}^2)}$ for $p \in [1,\infty]$ for simplicity. We use letters $C, C_1, c,\cdots$ to denote various absolute positive constants, and their values may change from line to line. When a constant depends on some parameters, we explicitly indicate dependence as subscripts. We use superscripts to refer to components of a vector: for example, $u = (u^1, u^2)$ and $\Phi = (\Phi^1, \Phi^2)$.
\end{nota}

\section{Proof of Theorem \ref{thm:growth}}

Our initial vorticity $\omega_0$ will be odd both in the variables $x_1$ and $x_2$. Since this symmetry persists for all time, we may view $\omega(t,\cdot)$ as defined just on $[0,1]^2$.  Pick a large integer $N$, and let us define the initial vorticity on $[0,1]^2$ as follows: \begin{equation}\label{Ini_vorticity}
	\omega_0(r,\theta) := \chi(r) \psi(\theta),
\end{equation}
where $\chi$ and $\psi$ are smooth bump functions. More specifically, they satisfy \begin{equation*}
\chi(r) := \begin{cases}
1 &\mbox{for }  r \in \left[N^{-1}, N^{-1/2} \right] \\
0 &\mbox{for }  r \notin  \left[N^{-1}/2, 2N^{-1/2}\right],
\end{cases} \qquad \mbox{and} \qquad \psi(\theta) := \begin{cases}
1 &\mbox{for }  \theta \in \left[\pi/4, \pi/3\right] \\
0 &\mbox{for }  \theta \notin  \left[\pi/6, 5\pi/12\right].\end{cases}
\end{equation*}
Since \begin{equation*}
| \nabla \omega_0|_2^2 = \int\int r |\pr_r \omega_0|^2 drd\theta  + \int \int \frac{1}{r} |\pr_\theta \omega_0|^2 drd\theta,
\end{equation*}
the main contribution of $| \nabla \omega_0|_2$ comes from the angular variation: $|\nabla \omega_0|_2 \approx c (\ln N)^{1/2}$ as $N \rightarrow \infty$.

As mentioned in the introduction, we need to work with a special time scale. Given $\tau^* > 0$ and $N$, we set $t^*(\tau,N) = \tau^* \ln\ln N/\ln N$ and we shall track the evolution of initial data \eqref{Ini_vorticity} on the time interval $[0,t^*]$. To get an idea of how this scale appears, recall that the main idea is to stretch vorticity in the $O(N^{-1})$ region using the chunk of vorticity ``behind''. Since initially $|\nabla\omega_0|_{L^2(O(N^{-1}))} =O(1)$ while $|\nabla \omega_0|_2 \approx c (\ln N)^{1/2}$, we need to stretch the $H^1$-norm in the local region by a factor of $(\ln N)^{1/2+\epsilon}$ to obtain norm inflation. In view of $|\nabla u_0|_{\infty} \approx c\ln N$, we achieve this goal once we sustain this lower bound on the velocity gradient during an interval of time $[0,t^*]$. It is important that in this time scale, fluid particles can move only up to a factor of $\ln N$, see \eqref{eq:flow} and \eqref{eq:radial_bound} below. 

Our main technical tool is the following expression for the velocity due to Kiselev and {\v{S}}ver{\'a}k \cite{MR3245016}; we use a version by Zlato{\v{s}} \cite[Lemma 2.1]{Z} which works in the case of the torus $\mathbb{T}^2=[-1,1)^2$. 

\begin{lemma}[Key Lemma]\label{lem:Zlatos}
	Let $\omega(t,\cdot)$ be odd in $x_1$ and $x_2$. Then for $x \in [0,1/2)^2$, we have \begin{equation}\label{eq:Zlatos}
	\frac{u^i(t,x)}{x_i} = (-1)^i \frac{4}{\pi} \int_{[2x_1,1) \times [2x_2,1)} \frac{y_1 y_2}{|y|^4} \omega(t,y) dy + B_i(t,x)
	\end{equation}
	with $|B_i| \le C|\omega|_{\infty}\left( 1+ \ln (1+ x_{3-i}/x_i) \right)$ for $i \in \{ 1,2\}$.
\end{lemma}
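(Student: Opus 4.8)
The plan is to derive \eqref{eq:Zlatos} directly from the periodic Biot--Savart law \eqref{eq:BS} by exploiting the odd-odd symmetry to replace the sum over all of $\mathbb{T}^2$ by an integral over the ``distant'' quadrant $[2x_1,1)\times[2x_2,1)$, with everything else absorbed into the bounded remainder $B_i$. First I would restrict attention to the $i=1$ component (the case $i=2$ being symmetric) and $x \in [0,1/2)^2$. Using that $\omega(t,\cdot)$ is odd in both $y_1$ and $y_2$, I would fold the fundamental-domain integral in \eqref{eq:BS}, combining the contributions of the four points $(\pm y_1, \pm y_2)$; this replaces the kernel $(x-y)^\perp/|x-y|^2$ by a symmetrized kernel $K(x,y)$ supported on $y \in [0,1)^2$, whose first component I will call $K^1(x,y)$. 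A direct computation of the symmetrized Newtonian kernel shows that $K^1(x,y)$ has the leading behavior $-\frac{4}{\pi}\, x_1\, \frac{y_1 y_2}{|y|^4}$ when $|x|\ll|y|$; the main algebraic content is to verify this asymptotic expansion and to control the error.

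The key steps, in order, are as follows. (1) Perform the symmetrization and write $u^1(t,x) = \int_{[0,1)^2} K^1(x,y)\,\omega(t,y)\,dy$ plus periodic tail terms from the $n \ne 0$ summands in \eqref{eq:BS}, where the latter have a smooth, bounded kernel on $[0,1/2)^2 \times [0,1)^2$ and hence contribute to $B_1$ a term bounded by $C|\omega|_\infty$. (2) Split the domain $[0,1)^2 = \big([2x_1,1)\times[2x_2,1)\big) \cup R$, where $R$ is the ``near'' region in which at least one coordinate $y_j$ is comparable to $x_j$. On the distant quadrant, Taylor expand $K^1(x,y)$ around $x = 0$: the zeroth-order term vanishes by the oddness of the kernel in $x$, the first-order term gives exactly $-\frac{4}{\pi} x_1 \frac{y_1 y_2}{|y|^4}\omega(t,y)$, and the quadratic remainder is integrable against $|\omega|_\infty$ over $\{|y| \ge 2\max(x_1,x_2)\}$, producing a contribution of size $\lesssim |\omega|_\infty x_1$ — i.e. after dividing by $x_1$, another bounded piece of $B_1$. (3) On the near region $R$, estimate $\int_R |K^1(x,y)|\,dy$ directly: the kernel is $O(1/|x-y|)$, and integrating over the relevant strips where $|y_1| \lesssim x_1$ or $|y_2| \lesssim x_2$ yields, after dividing by $x_1$, the logarithmic bound $C\big(1 + \ln(1 + x_2/x_1)\big)$; this is the term that accounts for the precise form of the remainder estimate. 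Collecting (1)--(3) gives \eqref{eq:Zlatos} with the claimed bound on $B_1$.

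I expect the main obstacle to be Step (3): carefully bounding the near-region contribution with the \emph{sharp} logarithmic dependence $\ln(1 + x_{3-i}/x_i)$ rather than a cruder (and insufficient) bound. The difficulty is that $K^1(x,y)$ is genuinely singular as $y \to x$, and the region $R$ is long and thin, so one must split $R$ into the pieces $\{|y_1 - x_1| \lesssim x_1,\ |y_2| \ge 2x_2\}$, $\{|y_1|\le 2x_1,\ |y_2 - x_2| \lesssim x_2\}$, etc., and on each piece track how the anisotropy $x_1 \ll x_2$ (or vice versa) enters the integral; it is precisely the strip where $y$ is at distance $\sim x_1$ from the $y_2$-axis but $y_2$ ranges up to $\sim 1$ that generates the $\ln(x_2/x_1)$ factor. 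Since this is exactly the computation carried out in \cite{MR3245016} and \cite[Lemma 2.1]{Z}, I would either reproduce it in the torus setting or, more economically, reduce to the whole-plane statement of \cite{MR3245016} by noting that the difference between the periodized kernel and the whole-plane kernel is bounded and smooth on the relevant region, and hence contributes only to $B_i$.
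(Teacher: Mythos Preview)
The paper does not prove this lemma; it is quoted from Zlato\v{s} \cite[Lemma~2.1]{Z} (itself an adaptation of the Key Lemma of Kiselev--\v{S}ver\'ak \cite{MR3245016} to the torus), and the authors simply invoke it as a black box. So there is no ``paper's own proof'' to compare against.

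That said, your sketch is essentially the argument carried out in those references: symmetrize the Biot--Savart kernel using the odd-odd structure, split the integration domain into a far quadrant $[2x_1,1)\times[2x_2,1)$ and a near region, Taylor expand the symmetrized kernel on the far part to extract the main term $-\tfrac{4}{\pi}x_1\,y_1y_2/|y|^4$, and estimate the near strips by direct integration to produce the logarithm $\ln(1+x_{3-i}/x_i)$. Your identification of Step~(3) as the delicate point is accurate --- the anisotropic strip where $|y_1|\lesssim x_1$ but $y_2$ ranges up to order one is precisely what generates the $\ln(x_2/x_1)$ factor --- and your suggested shortcut of reducing to the whole-plane result of \cite{MR3245016} by absorbing the smooth periodic correction into $B_i$ is exactly what Zlato\v{s} does. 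In short, your proposal is correct and matches the literature; there is simply nothing in the present paper to compare it to.
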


There are several striking features of this lemma, which we would like to emphasize. First, the expression \eqref{eq:Zlatos} essentially gives a \textit{pointwise} control over the velocity gradient, just under the assumption that $\omega(t,\cdot) \in L^\infty$. It is surprising that such a control is available, especially because the formula is applicable even in situations where $\nabla u$ is unbounded. Next, the integral in \eqref{eq:Zlatos} is \textit{monotone} in $\omega(t,\cdot)$, so that for the purpose of obtaining a lower bound on the velocity gradient, it suffices to find a region in space where vorticity is uniformly bounded from below. On the other hand, one should note that Lemma \ref{lem:Zlatos} is applicable only when the integral term in \eqref{eq:Zlatos} dominates the remainder term $B_i$.

The following estimates are standard (cf. \cite{MR2768550,MR1245492}) and will play a complementary r\^{o}le of the previous lemma.

\begin{lemma}\label{lem:standard}
	Let $(\omega,u,\Phi)$ to be the solution triple for the $2D$ Euler equations in $\mathbb{T}^2$ with initial data 
	$\omega_0$. The velocity is log-Lipschitz \begin{equation}\label{eq:logLip}
	|u(t,x) - u(t,y)| \le C|\omega_0|_\infty |x-y| \left( 1 +  \ln(4/|x-y|) \right),
	\end{equation}
	and the flow maps $\Phi(t,\cdot):\mathbb{T}^2 \rightarrow \mathbb{T}^2$ for $ 0 \le t \le (C|\omega_0|_{\infty})^{-1}$ satisfy quasi-Lipschitz estimates of the form \begin{equation}\label{eq:flow}
	c |x-y|^{\exp(ct|\omega|_\infty)} \le |\Phi(t,x)- \Phi(t,y)| \le C |x-y|^{\exp(-Ct|\omega|_\infty)}.
	\end{equation}
\end{lemma}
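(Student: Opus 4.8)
The plan is to derive both estimates from the explicit form of the Biot--Savart kernel together with a Gronwall-type argument; throughout, the transport identity \eqref{eq:vt} keeps $|\omega(t,\cdot)|_\infty = |\omega_0|_\infty$, so the two appear interchangeably.

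For the log-Lipschitz bound \eqref{eq:logLip} I would fix $t$, write the plane kernel $K(w) = \frac{1}{2\pi} w^\perp/|w|^2$, and use that the periodic velocity kernel in \eqref{eq:BS} equals $K(w) + R(w)$ with $R$ a $C^1$ (indeed smooth) function on $\mathbb{T}^2$ --- the point being that the periodic Green's function differs from $-\frac{1}{2\pi}\ln|x-y|$ by a function smooth near the diagonal. Set $\delta = |x-y|$; since $\delta \le \mathrm{diam}(\mathbb{T}^2) < 4$ we have $\ln(4/\delta) > 0$. The $R$-part contributes at most $C|\omega|_1 \delta \le C|\omega|_\infty \delta$. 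For the $K$-part I would split the domain of integration in $z$ into the near region $A = \{ |x-z| < 3\delta \}$ and its complement: on $A$ one bounds $|K(x-z)| + |K(y-z)| \le C(|x-z|^{-1} + |y-z|^{-1})$ and integrates against $|\omega|_\infty$, using $\int_{|x-z| < 3\delta} |x-z|^{-1}\, dz \le C\delta$, to obtain $\le C|\omega|_\infty \delta$; on $A^c$ the segment joining $x$ and $y$ stays at distance $\ge \frac{2}{3}|x-z|$ from $z$, so the mean value theorem together with $|\nabla K(\xi)| \le C|\xi|^{-2}$ gives $|K(x-z) - K(y-z)| \le C\delta|x-z|^{-2}$, and integrating over $3\delta \le |x-z| \le C$ produces $\le C|\omega|_\infty \delta\ln(C/\delta)$. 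Summing the three pieces yields \eqref{eq:logLip}.

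For the flow estimates \eqref{eq:flow} I would fix $x \neq y$, set $d(t) = |\Phi(t,x) - \Phi(t,y)|$, and note $0 < d(t) < 4$ since $\Phi(t,\cdot)$ is an area-preserving bijection. Inserting \eqref{eq:logLip} into \eqref{eq:flowdef} gives the differential inequality $\bigl| \frac{d}{dt} d(t) \bigr| \le C|\omega_0|_\infty\, d(t)\bigl( 1 + \ln(4/d(t)) \bigr)$ for a.e.\ $t$. Then $\phi(t) := \ln\bigl( 1 + \ln(4/d(t)) \bigr)$ satisfies $|\phi'(t)| \le C|\omega_0|_\infty$, hence $|\phi(t) - \phi(0)| \le Ct|\omega_0|_\infty$ with $\phi(0) = \ln\bigl(1 + \ln(4/|x-y|)\bigr)$; exponentiating twice and absorbing the bounded prefactors of the shape $(4e)^{\pm(1 - e^{\mp Ct|\omega_0|_\infty})}$ into the constants yields $c|x-y|^{\exp(Ct|\omega_0|_\infty)} \le d(t) \le C|x-y|^{\exp(-Ct|\omega_0|_\infty)}$. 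The prefactor in the lower bound stays bounded below by an absolute constant precisely on the range $t \le (C|\omega_0|_\infty)^{-1}$, which is where that restriction enters; alternatively the lower bound is obtained by applying the upper bound to the inverse flow $\Phi(t,\cdot)^{-1}$, which carries the same $L^\infty$-bound on vorticity.

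I do not expect a genuine obstacle, as the lemma is classical, but the step that needs the most care is the near/far splitting above: the inner radius must be taken large enough (a factor $3$ rather than $2$ times $\delta$ works) that the whole segment $[x,y]$ avoids the singularity of $K(\cdot - z)$ for $z$ in the far region, so that $|\nabla K(\xi)| \le C|\xi|^{-2}$ may be used uniformly along it. The only other point to verify is that the periodic correction $R$ is $C^1$ near the diagonal, which is standard; the remainder is a routine Gronwall integration.
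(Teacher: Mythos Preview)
Your proof is correct and, for the flow estimate \eqref{eq:flow}, follows the same Gronwall argument as the paper: both derive $|d'| \le C|\omega_0|_\infty\, d(1+\ln(4/d))$ from \eqref{eq:logLip} and integrate, the paper by comparison with the explicit ODE solutions and you via the change of variable $\phi = \ln(1+\ln(4/d))$, which amounts to the same thing. The paper does not actually prove \eqref{eq:logLip} at all, citing it as standard, so your near/far kernel splitting simply supplies what the paper omits.
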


Note that the argument of the logarithm in \eqref{eq:logLip} is always greater than 1 since $|x-y| \le \sqrt{2}$ in our torus $[-1,1)^2$. 

\begin{proof}
	Although these estimates are well-known, we provide a proof of \eqref{eq:flow} (assuming the bound in \eqref{eq:logLip}), as it appears throughout the arguments given below. 
	
	For simplicity, we set $d(t) := |\Phi(t,x) - \Phi(t,y)|$, and from the definition of flow we have \begin{equation*}
	\begin{split}
	\frac{d}{dt} \left( \Phi(t,x) - \Phi(t,y) \right) = u(t,\Phi(t,x)) - u(t,\Phi(t,y)),
	\end{split}
	\end{equation*} and applying the estimate \eqref{eq:logLip} gives a bound \begin{equation*}
	\begin{split}
	\left|\frac{d}{dt} d(t) \right| \le C|\omega_0|_{\infty} d(t) \left(1+ \ln \frac{4}{d(t)} \right),
	\end{split}
	\end{equation*} which implies \begin{equation*}
	\begin{split}
	\left|  \frac{d}{dt} \ln \frac{4}{d(t)}   \right| \le C|\omega_0|_{\infty} \left(1+  \ln \frac{4}{d(t)}   \right).
	\end{split}
	\end{equation*} Denoting $f(t)$ and $g(t)$ as the unique solution of the respective ODE system \begin{equation*}
	\begin{split}
	  \frac{d}{dt} \ln \frac{4}{f(t)}  = C|\omega_0|_{\infty} \left(1+  \ln \frac{4}{f(t)}   \right),\qquad \frac{d}{dt} \ln \frac{4}{g(t)}  =- C|\omega_0|_{\infty} \left(1+  \ln \frac{4}{g(t)}   \right)
	\end{split}
	\end{equation*} on the time interval $[0,(C|\omega_0|_{\infty})^{-1}]$ with initial data $f(0) = g(0) = d(0) = |x-y|$, we obtain the desired estimates as $g(t) \le d(t) \le f(t)$. 
\end{proof}

\begin{figure}
	\includegraphics[height=60mm]{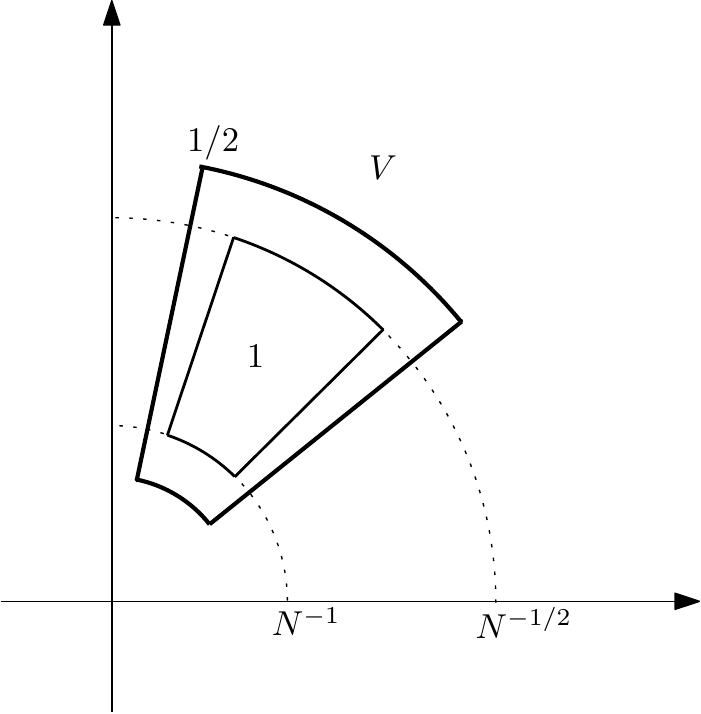} \qquad
	\includegraphics[height=60mm]{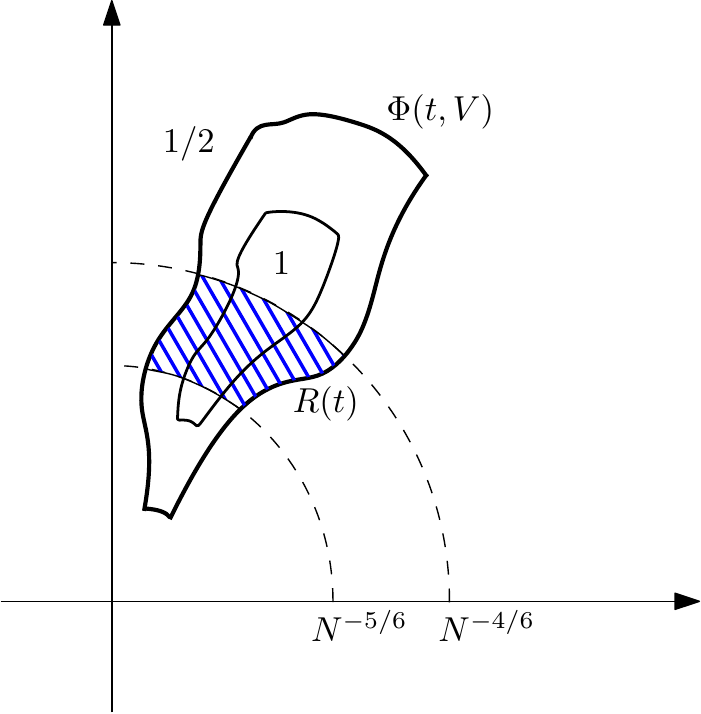}
	\centering
	\caption{The figure on the left describes the initial data $\omega_0^{(N)}$, where the two rectangles represent the region where $\omega_0^{(N)} \equiv 1$ (inner) and $V$ (outer). The right figure shows possible evolution of the set $V$ under the flow. The shaded region represents $R(t)$.}
	\label{fig:rectangles}
\end{figure}

Given lemmas above, we present the proof of Theorem \ref{thm:growth}.

\begin{proof}[Proof of Theorem \ref{thm:growth}]
	We will instead show the following statement:
	
	\medskip
	
	\textbf{Claim.} For any $M > 0$, there exists some $N_0, \tau^* > 0$ depending only on $M$ such that for all $N \ge N_0$, the solution associated with the initial data as in \eqref{Ini_vorticity} satisfies, with an absolute constant $C$, \begin{equation}\label{eq:goal}
	\frac{|\nabla \omega(t_N,\cdot)|_2}{|\nabla\omega_0|_2} \ge CM^{1/2} \quad\mbox{for some}\quad  0 < t_N \le \tau^* \frac{\ln\ln N}{\ln N}.
	\end{equation}
	
	Once it is established, we simply use the scaling symmetry of the Euler equation: given a solution $\omega(t,x)$ and $\lambda > 0$, $\omega^\lambda(t,x):=\lambda\omega(\lambda t,x)$ is another solution with initial data $\lambda \omega_0$, and we can pick $\lambda = (\ln N)^{-1/2} M^{-1/4}$ to achieve the statements of the theorem.
	
	\medskip
	
	Given $M > 0$, we fix $\tau^* = \alpha_M$, where $\alpha_M>0$ is a constant depending only on $M$ to be defined below. In several places of the following argument, it is implicitly assumed that $N$ is sufficiently large with respect to $M$ and some absolute constants appearing in the proof.
	
	Consider the annulus $A = \{ r : N^{-5/6} \le r \le N^{-4/6} \}$. During the time interval $[0,t^*]$, particles starting from the arc $\{ (r,\theta): r  = N^{-1}, \pi/4 \le \theta \le \pi/3 \}$ remain in the region $\{ r < N^{-5/6} \}$ under the flow $\Phi(t,\cdot)$. Similarly, particles from $\{ (r,\theta): r  = N^{-1/2}, \pi/4 \le \theta \le \pi/3 \}$ cannot escape $\{ r > N^{-4/6} \}$. Both statements follow from \eqref{eq:flow} applied with $y = 0$ and $|x| = N^{-m}$ (where $1/2 \le m \le 1$): we have \begin{equation}\label{eq:centerflow}
	c|x|^{\exp(ct|\omega|_\infty)} \le |\Phi(t,x)| \le C|x|^{\exp(-Ct|\omega|_\infty)},
	\end{equation}
	and since $t = \tau \ln \ln N / \ln N$ for some $ 0 \le \tau \le \tau^*$, we obtain \begin{equation}\label{eq:radial_bound}
	c(\ln N)^{c\tau} \le \frac{|\Phi(t,x)|}{|x|} \le C(\ln N)^{C\tau}
	\end{equation}
	with constants $c,C > 0$ uniform over $1/2\le m\le 1$. In particular, it implies that any line segment $\{ (r,\theta_0): N^{-1} \le r \le N^{-1/2} \}$ should evolve in a way that it intersects each circle $\{ r = r_0 \}$ for $N^{-5/6} \le r_0 \le N^{-4/6}$. 
	
	Take the domain $V := \{ (r,\theta): \omega_0(r,\theta) \ge 1/2 \}$ and consider the region \begin{equation*}
	R(t) := \Phi(t,V) \cap A
	\end{equation*}
	which is a curvilinear rectangle whose two opposite edges are bounded by $A$ (see Figure \ref{fig:rectangles}). Note that $\omega(t,\cdot) \ge 1/2$ on $R(t)$. For each $r_0 \in [N^{-5/6},N^{-4/6}]$, consider the closed set \begin{equation*}
	I(t,r_0):=\{ 0 \le \theta \le \pi/2 : (r_0,\theta) \in R(t)\},
	\end{equation*} and let us denote its Lebesgue measure by $|I(t,r_0)|$. To show that the $H^1$-norm grows, we are led to consider two different scenarios.
	
	\medskip
	
	\textbf{Case I}. Assume that there exists a time moment $0< t_{cr} \le t^*$ such that for more than half (with respect to the Haar measure $r^{-1}dr$) of $r_0 \in [N^{-5/6},N^{-4/6}]$, we have $|I(t_{cr},r_0)| \le M^{-1}$.
	
	\medskip
	
	If $r_0 \in [N^{-5/6},N^{-4/6}]$ is such that $|I(t_{cr},r_0)| \le M^{-1}$, then we can definitely pick some $\theta_0 = \theta_0(r_0)$ such that the points $(r_0,\theta_0)$ and $(r_0,\theta_0 + \delta)$ with $0 < \delta \le M^{-1}$ satisfy $\omega(t_{cr},r_0,\theta_0) = 1$ and $\omega(t_{cr},r_0,\theta_0+\delta) = 1/2$. This implies a lower bound \begin{equation*}
	M^{-1/2} \left( \int_{I(t_{cr},r_0)}|\pr_\theta\omega(t_{cr},r_0,\theta)|^2 d\theta \right)^{1/2} \ge  \int_{\theta_0}^{\theta_0+\delta} \pr_\theta\omega(t_{cr},r_0,\theta) d\theta = 1/2
	\end{equation*}
	which in turn gives that \begin{equation*}
	|\nabla \omega(t_{cr},\cdot)|_2^2 \ge \int_{N^{-5/6}}^{N^{-4/6}} \int_{I(t_{cr},r)} |\pr_\theta\omega(t_{cr},r,\theta)|^2 d\theta \frac{dr}{r} \ge CM \ln N,
	\end{equation*}
	with some absolute constant $C > 0$. We have established the \textbf{Claim} in this case, recalling that $|\nabla\omega_0|_2^2\le C(\ln N)$.
	
	\medskip
	
	\textbf{Case II}. For \textit{all} $t\in [0,t^*]$, for at least half (again with respect to the  measure $r^{-1}dr$) of $r_0 \in [N^{-5/6},N^{-4/6}]$, we have $|I(t,r_0)| \ge M^{-1}$.
	
	\medskip
	
	In this scenario, we will track the evolution of the following segment \begin{equation*}
	S  = \{ (h,h): N^{-1} \le h \le (\ln N)^{K\tau^*} N^{-1}  \}
	\end{equation*}
	for the time interval $[0,t^*]$, where $K>0$ is an absolute constant to be determined below. Since $\omega(t,\Phi(t,S)) \equiv 1$ for all $t \ge 0$, to show growth of the $H^1$-norm of $\omega$, it is enough to demonstrate that $\Phi(t^*,S)$ is close enough to the vertical segment (where $\omega$ vanishes). In the remaining part of the proof, we will always assume that $x \in S$ and $t \in [0,t^*]$. 
	
	As a first step, we collect simple bounds on the trajectory of $x = (h,h) \in S$, which will in particular guarantee the applicability of Lemma \ref{lem:Zlatos}. To begin with, applying \eqref{eq:flow} with $y = (0,0)$ gives \begin{equation*}
	\Phi^2(t,x) \le |\Phi(t,x)| \le h (\ln N)^{C\tau^*}
	\end{equation*} (recall that $t^*$ and $\tau^*$ are related by $t^* = \tau^* \ln \ln N/\ln N$). Next, to obtain a lower bound on $\Phi^1(t,x)$, we use the log-Lipschitz estimate: \begin{equation*}
	|u^1(t,\Phi(t,x))| = |u^1(t,\Phi(t,x))-u^1(t,0,\Phi^2(t,x))| \le C\Phi^1(t,x) \left( 1+ \ln \frac{4}{\Phi^1(t,x)} \right)
	\end{equation*} and since \begin{equation*}
	\begin{split}
	\frac{d}{dt} \Phi^1(t,x) = u^1(t,\Phi(t,x)),
	\end{split}
	\end{equation*} proceeding exactly as in the proof of the estimate \eqref{eq:flow} of Lemma \ref{lem:standard} gives that \begin{equation*}
	\Phi^1(t,x) \ge \frac{h}{(\ln N)^{C\tau^*}}.
	\end{equation*} Hence, for $x \in S$, we have \begin{equation*}
	\frac{\Phi^2(t,x)}{\Phi^1(t,x)} \le C(\ln N)^{2C\tau^*}
	\end{equation*}
	for $0<t<t^*$.
	
	On the other hand, with our assumption on $|I(t,r)|$, we estimate the integral appearing in Lemma \ref{lem:Zlatos} at the point $\hat{x} = (N^{-7/8},N^{-7/8})$:
	\begin{equation*}
	\begin{split}
	Q(t,\hat{x}) :=  \int_{ [2N^{-7/8},1)^2 } \frac{y_1 y_2}{|y|^4} \omega(t,y)dy \ge \frac{1}{2}\int_{N^{-5/6}}^{N^{-4/6}}\int_{I(t,r)} \frac{\sin\theta \cos\theta}{r} d\theta
	dr\end{split}
	\end{equation*}
	and upon setting \begin{equation*}
	c_M:= \min_{\substack{|I| = 1/2M \\ I \subset [0,\pi/2]}} \int_I \sin\theta\cos\theta d\theta,
	\end{equation*}
	we obtain \begin{equation}\label{eq:integral_lb}
	Q(t,\hat{x}) \ge c_1 c_M \ln N
	\end{equation}
	for some $c_1>0$, whenever $0 \le t \le t^*$. Therefore, we conclude that the $B_i(t,x)$-term can be neglected in Lemma \ref{lem:Zlatos} (by possibly adjusting the value of $c_1$ in \eqref{eq:integral_lb}) as long as we apply it to the trajectory of $S$. That is, 
	\begin{equation*}
	\frac{-u^1(t,\Phi(t,x))}{\Phi^1(t,x)} \ge C Q(t,\hat{x}) \ge C_M \ln N,
	\end{equation*}
	for $x \in S$ and $ 0 \le t \le t^*$ for some constant $C_M > 0$ depending only on $M$. Similarly, we deduce that $u^2(t,\Phi(t,x)) \ge 0$ on the same time interval for $x \in S$. From these bounds, it follows that the curve $\hat{S} := \Phi(t^*,S)$ is contained in the region $$\{ (y_1,y_2): y_2/y_1 \ge (\ln N)^{C_M \tau^*} \}.$$ The flow estimate \eqref{eq:radial_bound} further gives that $\hat{S}$ intersects the circles $\{ r = (\ln N)^{C\tau^*}/N \}$ and $\{ r = (\ln N)^{(K-c)\tau^*}/N \}$. We could have taken $K$ so that $K -c > C$ (where $c,C$ are constants from the estimate \eqref{eq:radial_bound}). Then, for each $ (\ln N)^{C\tau^*}/N \le r \le (\ln N)^{(K-c)\tau^*}/N$, we may find a point (in polar coordinates) of the form $(r,\theta^*(r))$ on $\hat{S}$ such that $\pi/2 - \theta^*(r) \le C(\ln N)^{-C_M \tau^*}$. Therefore, we deduce that \begin{equation*}
	\frac{1}{C}(\ln N)^{C_M \tau^*} \le \int_{0 \le \theta \le \pi/2} |\partial_\theta \omega(t^*,r,\theta)|^2 d\theta 
	\end{equation*}
	and integrating over $(\ln N)^{C\tau^*}/N \le r \le (\ln N)^{(K-c)\tau^*}/N$ against $r^{-1}dr$ with the choice  $\tau^* := 1/C_M$ gives \begin{equation*}
	\begin{split}
	\frac{1}{C} (\ln N)  \ln \left( \frac{K-c-C}{C_M} \ln N    \right) \le \int \int \frac{1}{r}  |\partial_\theta \omega(t^*,r,\theta)|^2 d\theta dr \le |\nabla\omega(t^*)|_2^2
	\end{split}
	\end{equation*} which gives the desired lower bound in \eqref{eq:goal}.
\end{proof}

\begin{rem}\label{rem:disc}
	This construction carries over to the setting of the whole domain and a bounded open set, with minor modifications. 
	
	In the case when the fluid domain is a disc (or more generally, a bounded open set with an axis of symmetry), we can utilize the boundary to achieve Theorem \ref{thm:growth} without relying on a contradiction argument. To be more specific, assume for simplicity that our domain is the upper half-plane $\{ (r,\theta): 0 \le \theta \le \pi \}$. Take $\omega_0$ which is odd in $x_1$ and equals a smoothed out version of the indicator function on the polar rectangle $[N^{-1},N^{-1/2}] \times [0,\pi/4]$ in the positive quadrant. Then it can be shown that for the time interval that we consider, we do not run out of angles; i.e. \textbf{Case I} does not happen. The same can be said for the proof of Theorem \ref{thm:pat}, and actually one can even show continuous-in-time loss of regularity of the solution. We expand on this point in our forthcoming work \cite{EJ}. 
	\end{rem}

\begin{rem}
	Inspecting the proof, one can check that $C_M  = CM^{-2}$ works, and so that we may choose $N \ge CM \exp(CM^2)$ as $M \rightarrow \infty$. In other words, the initial data in \eqref{Ini_vorticity} grows at least by a multiple of $(\ln N)^{1/2 - \epsilon}$ (in both scenarios). Again, when we have a boundary available, it is not necessary to introduce $M$ and we obtain growth by a factor of $(\ln N)^K$ for any $K>0$ as long as $N$ is sufficiently large.
\end{rem}

\section{Proof of Theorem \ref{thm:pat}}
	This time, we consider an odd initial vorticity defined on $[0,1)^2$ by \begin{equation}\label{ini:continumm}
	\omega_0(r,\theta) = \left(\ln \frac{1}{r}\right)^{-\alpha} \psi(\theta)\xi(r),
	\end{equation}
	where $\psi(\cdot)$ is the same angular bump function as in \eqref{Ini_vorticity} and $\xi(r)$ is a smooth bump function which identically equals 1 for $0 \le r \le \epsilon/2$ and vanishes for $r \ge 2\epsilon/3$. Clearly, $\omega_0$ is a bounded continuous function and by choosing $\epsilon > 0$ small enough, we may assume that $|\omega_0|_\infty, |\nabla\omega_0|_2 \le 1$. Given $s$ and $p$ satisfying $sp = 2$ and $1 \le s < 6/5$, we can find a value of $1/2 < \alpha < 3/5$ so that $\Vert \omega_0 \Vert_{W^{s,p}} < + \infty$: note that \begin{equation*}
	\begin{split}
	|\nabla|^s \omega_0(r,\theta) \approx \frac{1}{r^s}\left(\ln \frac{1}{r} \right)^{-\alpha} \psi'(\theta)\xi(r),\qquad r \ll 1,
	\end{split}
	\end{equation*} so that given $sp = 2$, \begin{equation*}
	\begin{split}
	\omega_0 \in W^{s,p}(\mathbb{T}^2) \qquad \mbox{ if and only if } \qquad \alpha p > 1.
	\end{split}
	\end{equation*}
	
	It can be shown that the solution associated with the initial data \eqref{ini:continumm} remains $C^\infty$-smooth away from the origin for all time (see Proposition \ref{prop:smooth} below). Hence, if we denote the solution by $\omega(t,\cdot)$, its $H^1$-norm can be unambiguously defined by \begin{equation*}
	\lim_{\delta \rightarrow 0^+} \int_{ |y| > \delta} |\nabla \omega(t,y)|^2 dy,
	\end{equation*}
	which can take the value $+\infty$.	We will show that there exists a sequence of positive time moments $\{ t_M \}_{M \ge 1}$ and a sequence of radii $\{ r_M \}_{M \ge 1}$, such that $t_M \rightarrow 0^+$, $r_M \rightarrow 0^+$, and for a fixed absolute constant $c>0$, \begin{equation*}
	\int_{|y| > r_M} |\nabla\omega(t_M,y)|^2 dy > cM^{1/2}.
	\end{equation*}
	For each fixed $r > 0$, the function $\int_{|y| > r} |\nabla\omega(t,y)|^2 dy$ is continuous in time and provides a lower bound for $|\nabla\omega(t,\cdot)|_2^2$. Therefore, the existence of sequences satisfying above gives the statement in Theorem \ref{thm:pat}. 
	
	The proof we present is strictly analogous to that of Theorem \ref{thm:growth}, as $\omega_0$ in \eqref{ini:continumm} can be viewed as a ``continuum'' version of data from our previous proof. To be more specific, pick some large number $N$ and radially truncate the function \eqref{ini:continumm} at length scales $N^{-1}$ and $N^{-1/2}$. Then this is essentially a scalar multiple of the smooth initial data $\omega_0^{(N)}$ from the previous section, and recalling the scaling symmetry of the Euler equation, it follows that this truncated initial data grows in $H^1$ by a factor which diverges with $N$ at some time moment $ 0 < t^{(N)}$ which converges to 0 as $N \rightarrow +\infty$. Therefore it is intuitively clear that the data \eqref{ini:continumm} would escape $H^1$ immediately.

\begin{proof}[Proof of Theorem \ref{thm:pat}]
	Given $M > 0$, we consider the time moment \begin{equation*}
	t^* = \tau^* \frac{\ln \ln N}{(\ln N)^{1-5\alpha/3}}
	\end{equation*}
	where $\tau^* = \tau^*(M)$ is to be determined later. It will be implicitly assumed that $N$ is sufficiently large with respect to $M$ and a few absolute constants. In particular, as $1-5\alpha/3 > 0$, it guarantees that $t^* \ll 1$. Throughout the proof, it will be always assumed that the variable $t$ take values in the interval $[0,t^*]$.

	The outline of the argument is as follows: we identify a ``bulk'' region which initially extends over length scales $N^{-1/2}$ and $O(1)$, and a ``local'' region near $N^{-1}$. In the special time interval that we consider, if there is too much angular squeezing of the bulk, then we are done. Otherwise, the bulk region has enough mass which stretches vorticity in the local region. We note in advance that, compared to the situation of Theorem \ref{thm:growth}, we have less precise information on the local particle trajectories, so we should apply Lemma \ref{lem:Zlatos} in a very careful manner.


	To begin with, using the basic estimate \eqref{eq:centerflow} (recall that $|\omega|_\infty \le 1$), we take $0< a_0 < \epsilon/2$ such that the fluid particles starting from $|x| > a_0$ at $t = 0$ cannot cross the circle $|x| = a_0/2$ within $[0,t^*]$, as $t^*$ can be taken to be much smaller than a few absolute constants. Similarly using the same estimate, we can ensure that the particles starting on the circle $|x| = N^{-5/10}$ cannot escape the annulus $$\{ N^{-6/10}<|x|< N^{-4/10} \}$$ in the same time interval. Indeed, taking the logarithms of \eqref{eq:centerflow} (assuming $|x|, |\Phi(t,x)|$ small enough), \begin{equation*}
	\begin{split}
	e^{ct} \ln \frac{1}{|x|} - c_1 \ge \ln \frac{1}{|\Phi(t,x)|} \ge e^{-Ct} \ln \frac{1}{|x|} - C_1
	\end{split}
	\end{equation*} so that in the time interval that we consider, $\ln(1/|\Phi(t,x)|)$ is equivalent to $\ln(1/|x|)$ up to absolute constants which can be assumed arbitrarily close to 1, uniformly in $t$ and $|x|$. 
	
	Given these bounds, take the polar rectangle \begin{equation*}
	V := \{ (r,\theta): N^{-1/2} \le r \le a_0, \pi/4 \le \theta \le 3\pi/8 \}
	\end{equation*}
	and consider intersections of the form \begin{equation*}
	R(t):= \Phi(t,V) \cap \{ (r,\theta): N^{-4/10} \le r \le a_0/2\}.
	\end{equation*}
	Note that on the ``angular'' sides of $V$, $\omega_0$ takes the values $(\ln r^{-1})^{-\alpha}$ and $\beta (\ln r^{-1})^{-\alpha}$ respectively, for some $0<\beta<1$. For each $t \in [0,t^*]$ and $r \in [N^{-1/4},a_0/2]$, we consider the (non-empty) set of angles \begin{equation*}
	I(t,r):= \{ 0 \le \theta \le \pi/2 : (r,\theta) \in R(t)  \}.
	\end{equation*}
	
	We again consider two cases; introducing the set of radii with ``enough'' angles \begin{equation*}
	\begin{split}
	A(t) := \{ r \in [N^{-1/4},a_0/2] :  |I(t,r)| \ge M^{-1} \left( \ln \frac{1}{r} \right)^{-\alpha/3}  \}
	\end{split}
	\end{equation*} (note the power $-\alpha/3$) and first, assume that there exists some $0<t_{cr}<t^*$ such that
	\begin{equation*}
	\begin{split}
	\int_{r \in A(t_{cr})}  \left( \ln \frac{1}{r} \right)^{-5\alpha/3} \frac{dr}{r} \le \frac{1}{2} \int_{r \in [N^{-1/4},a_0/2] }\left( \ln \frac{1}{r} \right)^{-5\alpha/3} \frac{dr}{r} \le C \left( \ln N \right)^{1-5\alpha/3}. 
	\end{split}
	\end{equation*} In this case, we argue exactly as \textbf{Case I} of the previous proof: whenever $r \notin A(t_{cr})$, we integrate over angle to get \begin{equation*}
	\begin{split}
	\int_0^{\pi/2} |\partial_\theta \omega(t_{cr},r,\theta)|^2 d\theta \ge C M \left(  \ln \frac{1}{r} \right)^{-5\alpha/3},
	\end{split}
	\end{equation*} where we have used that when $r = |\Phi(t,y)|$,  $\omega(t,\Phi(t,y)) = \omega_0(y) = (\ln |y|^{-1})^{-\alpha}$ for $y$ having the form $(r,\pi/4)$ in polar coordinates and similarly $\omega(t,\Phi(t,y))  = \beta (\ln |y|^{-1})^{-\alpha}$ when $y = (r,3\pi/8)$, and that $\ln(1/|y|)$ and $\ln(1/|\Phi(t,y)|)$ are equivalent up to some absolute constants arbitrarily close to 1 (relative to the difference between $1$ and $\beta$). Integrating the above lower bound over $r \notin A(t_{cr})$ gives the desired estimate 
	\begin{equation*}
	\begin{split}
	\int_{|y| \ge N^{-4/10}} |\nabla\omega(t_{cr},y)|^2 dy \ge CM \int_{[N^{-4/10},a_0/2]\backslash A(t_{cr})}  \left(  \ln \frac{1}{r} \right)^{-5\alpha/3} \frac{dr}{r} \ge cM( \ln N )^{1-5\alpha/3} . 
	\end{split}
	\end{equation*}

	\begin{figure}
		\includegraphics[height=60mm]{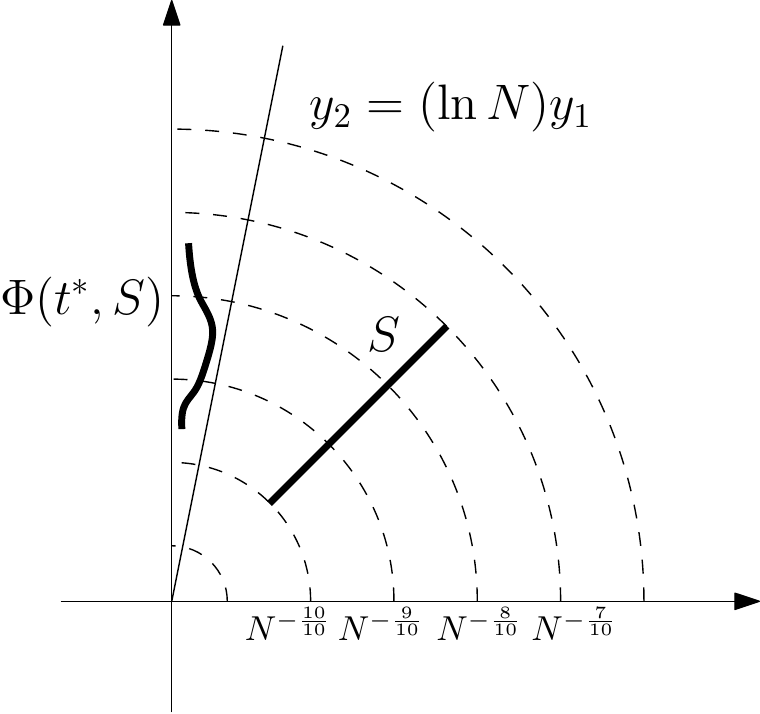}
		\centering
		\caption{Evolution of the local diagonal segment}
		\label{fig:rectangles2}
	\end{figure}


	Therefore, we may assume that for all $t\in [0,t^*]$, we have a lower bound \begin{equation*}
	\begin{split}
	\int_{r \in A(t)} \left( \ln \frac{1}{r}\right)^{-5\alpha/3} \frac{dr}{r} \ge c(\ln N)^{1-5\alpha/3}. 
	\end{split}
	\end{equation*} Under this hypothesis, we shall track the evolution of the diagonal segment in the ``local'' region: \begin{equation*}
	S :=\{ (h,h): N^{-1} \le h \le N^{-7/10} \}.
	\end{equation*} We may assume that the trajectories of the two endpoints of $S$ are trapped in the annuli $\{ N^{-11/10} < r < N^{-9/10} \}$ and $\{ N^{-8/10} < r < N^{-6/10} \}$, respectively. Assume for a moment that we have \begin{equation}\label{eq:containment}
	\begin{split}
	\Phi(t^*,S) \subset \{ (y_1,y_2): y_2 \ge (\ln N) y_1 \},
	\end{split}
	\end{equation} see Figure \ref{fig:rectangles2}. On the set $\Phi(t^*,S)$, $\omega(t^*,\cdot)\ge C(\ln N)^{-\alpha}$, and on the vertical line, $\omega(t^*,\cdot) \equiv 0$. Therefore, for each $N^{-9/10} < r < N^{-8/10}$, we have a lower bound \begin{equation*}
	\begin{split}
	\left| \frac{\pi}{2} - \theta^*(r) \right|^{1/2} \left(  \int |\partial_\theta \omega(t^*,r,\theta)|^2 d\theta  \right)^{1/2} \ge \int_{\theta^*(r)}^{\pi/2} |\partial_\theta \omega(t^*,r,\theta)|d\theta \ge C(\ln N)^{-\alpha},
	\end{split}
	\end{equation*} where $(r,\theta^*(r))$ is a point on $\Phi(t^*,S)$. This gives \begin{equation*}
	\begin{split}
	\int_0^{\pi/2} |\partial_\theta \omega(t^*,r,\theta)|^2 d\theta \ge C(\ln N)^{1-2\alpha}, 
	\end{split}
	\end{equation*} and integrating over $N^{-9/10} < r < N^{-8/10}$ against $r^{-1}dr$, \begin{equation*}
	\begin{split}
	\int_{|y| \ge N^{-1} } |\nabla\omega(t^*,y)|^2 dy \ge \int_{N^{-9/10}}^{N^{-8/10}} \int_0^{\pi/2} |\partial_\theta \omega(t^*,r,\theta)|^2 d\theta \frac{dr}{r} \ge C(\ln N)^{2(1-\alpha)}. 
	\end{split}
	\end{equation*} Since $\alpha <1$, we obtain the desired lower bound. Hence our goal now consists of establishing the containment in \eqref{eq:containment}.

	
	Let us begin by obtaining a lower bound on the integral term appearing in Lemma \ref{lem:Zlatos}. We fix a reference point $\hat{x} = (N^{-6/10},N^{-6/10})$. Then,
	\begin{equation*}
	Q(t,\hat{x}) :=\int_{ [2N^{-6/10},1)^2  } \frac{y_1y_2}{|y|^4} \omega(t,y)dy \ge C \int_{N^{-1/4}}^{a_0/2}   \int_{I(t,r)}  \sin\theta\cos\theta d\theta \left( \ln \frac{1}{r} \right)^{-\alpha} \frac{dr}{r},
	\end{equation*} where we have again used the observation that $\ln|\Phi(t,x)|^{-1}$ and $\ln|x|^{-1}$ are comparable. Now recalling that \begin{equation*}
	\begin{split}
	|I(t,r)| \ge M^{-1} \left( \ln \frac{1}{r} \right)^{-\alpha/3},\qquad r \in A(t),
	\end{split}
	\end{equation*} for $r \in A(t)$ we have an estimate \begin{equation*}
	\begin{split}
	\int_{I(t,r)}  \sin\theta\cos\theta d\theta \ge c M^{-2}  \left(\ln \frac{1}{r} \right)^{-2\alpha/3}
	\end{split}
	\end{equation*} which gives (under our hypothesis on the size of $A(t)$) \begin{equation*}
	\begin{split}
	Q(t,\hat{x}) \ge C_M \int_{A(t)}  \left( \ln \frac{1}{r} \right)^{-5\alpha/3} \frac{dr}{r} \ge C_M (\ln N)^{1-5\alpha/3}. 
	\end{split}
	\end{equation*}
	
	We may now apply Lemma \ref{lem:Zlatos}. From now on, we reserve the letter $x$ for points in the diagonal segment $S$. To begin with, on the diagonal, the error terms $B_1, B_2$ in Lemma \ref{lem:Zlatos} are bounded by an absolute constant, and since $Q(t,\hat{x}) \gg 1$, we can ensure that the trajectory of $x$ stays in the region $\{(y_1,y_2) : y_2 \ge y_1 \}$. Then in turn, this information guarantees that $u^2(t,\Phi(t,x))$ is positive, as $|B_2|$ is bounded by an absolute constant whenever $\{(y_1,y_2) : y_2 \ge y_1 \}$, which gives in particular $\Phi^2(t,x) \ge x_2$. 
	
	Fix some $x \in S$, and assume for the sake of contradiction that $\Phi(t,x)$ is not contained in the region $\{ (y_1,y_2): y_2 \ge (\ln N)y_1 \}$, for \textit{all} $0 \le t \le t^*$. It guarantees that \begin{equation*}
	\begin{split}
	|B_1(t,\Phi(t,x))| \le C\left(1 + \ln\left(1 + \frac{\Phi^2(t,x)}{\Phi^1(t,x)}\right) \right) \le C\ln \ln N
	\end{split}
	\end{equation*} and in particular the $B_1$ term is dominated by $Q(t,\hat{x})$. Hence, \begin{equation*}
	\begin{split}
	\frac{-u^1(t,\Phi(t,x))}{\Phi^1(t,x)} \ge CQ(t,\hat{x}) \ge C_M (\ln N)^{1-5\alpha/3},
	\end{split}
	\end{equation*} and therefore integrating in time over $[0,t^*]$ with the choice $\tau^* = c/C_M$ (for some absolute constant $c>0$), we deduce $\Phi^1(t^*,x) \le x_1 (\ln N)^{-1}$. Combined with $\Phi^2(t^*,x) \ge x_2 = x_1$, \begin{equation*}
	\begin{split}
	\frac{\Phi^2(t^*,x)}{\Phi^1(t^*,x)} \ge \ln N. 
	\end{split}
	\end{equation*} This is a contradiction, so there must exist $0 \le t' \le t^*$ for which \begin{equation*}
	\begin{split}
	\frac{\Phi^2(t',x)}{\Phi^1(t',x)} \ge \ln N. 
	\end{split}
	\end{equation*} However, observe that for any point on the line $y_2 = (\ln N)y_1$, we have $|B_1| \le \ln\ln N$, so that the trajectory of $x$ for $t \ge t'$ cannot escape the region $\{y_2 \ge (\ln N)y_1\}$ unless $|\Phi(t,x)|$ becomes larger than $|\hat{x}|$, which is impossible during the time interval $[0,t^*]$. This finishes the proof. 
\end{proof}

\begin{rem}
	One does not face the restriction $\alpha < 3/5$ in the presence of a boundary.
\end{rem}

For the convenience of the reader, we give a proof that $\omega(t,\cdot)$ in the case of Theorem \ref{thm:pat} actually stays $C^\infty$ away from the origin.

\begin{prop}\label{prop:smooth}
	Consider $\omega_0 \in L^\infty(\mathbb{T}^2)$ which is $C^\infty$ away from a closed set $A \in \mathbb{T}^2$. Then, the unique solution $\omega(t,\cdot)\in L^\infty(\mathbb{T}^2)$ of the 2D Euler equation stays $C^\infty$ away from $\Phi(t,A)$ for all $t > 0$. 
\end{prop}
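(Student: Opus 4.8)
The plan is to reduce the statement to a regularity assertion about the flow map and then bootstrap. Fix $T>0$ arbitrary; since the claim is local in time it suffices to work on $[0,T]$, and we write $A_t:=\Phi(t,A)$. By the Yudovich theory (cf. \cite{MR1245492}: existence of the triple $(\omega,u,\Phi)$, the log-Lipschitz bound for $u(t,\cdot)$ uniform on $[0,T]$ from Lemma~\ref{lem:standard}, and the fact that $\Phi(t,\cdot):\mathbb{T}^2\to\mathbb{T}^2$ is an area- and orientation-preserving homeomorphism with $\omega(t,\Phi(t,x))=\omega_0(x)$), I first record the following soft facts: $A_t$ is closed and $\Phi(t,\cdot)$ maps $\mathbb{T}^2\setminus A$ bijectively onto $\mathbb{T}^2\setminus A_t$; the trajectory through any $x_0\notin A$ never meets $A_t$ (by injectivity of $\Phi(t,\cdot)$); the space-time set $\Sigma:=\{(t,x):0\le t\le T,\ x\in A_t\}$ is compact, being the image of $[0,T]\times A$ under the continuous map $(t,a)\mapsto(t,\Phi(t,a))$; and $\omega(t,\cdot)=\omega_0\circ\Phi(t,\cdot)^{-1}$. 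Consequently it is enough to prove that $\Phi(t,\cdot)$ is $C^\infty$ on $\mathbb{T}^2\setminus A$ and $\Phi(t,\cdot)^{-1}$ is $C^\infty$ on $\mathbb{T}^2\setminus A_t$ for each $t\in[0,T]$: composing the latter with $\omega_0\in C^\infty(\mathbb{T}^2\setminus A)$ then gives the proposition, and the arbitrariness of $T$ handles all times.

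The flow regularity will be obtained by induction on an integer $m\ge0$, the hypothesis being that for every $t\in[0,T]$ one has $\omega(t,\cdot)\in C^{m,\gamma}_{\mathrm{loc}}(\mathbb{T}^2\setminus A_t)$ for some $\gamma=\gamma(t)>0$, with all spatial derivatives of order $\le m$ depending continuously on $(t,x)$ on the open set $([0,T]\times\mathbb{T}^2)\setminus\Sigma$. The base case $m=0$ is where Lemma~\ref{lem:standard} enters: the lower bound in \eqref{eq:flow} shows that $\Phi(t,\cdot)^{-1}$ is Hölder continuous with a positive exponent $\gamma=\exp(-Ct|\omega_0|_\infty)$, and since $\omega_0$ is locally Lipschitz off $A$, the identity $\omega(t,\cdot)=\omega_0\circ\Phi(t,\cdot)^{-1}$ gives $\omega(t,\cdot)\in C^{0,\gamma}_{\mathrm{loc}}(\mathbb{T}^2\setminus A_t)$; joint continuity is clear.

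For the inductive step, assume the hypothesis at level $m$ and fix a compact set $K\subset\mathbb{T}^2\setminus A$. Then $\bigcup_{t\in[0,T]}\{t\}\times\Phi(t,K)$ is compact and disjoint from $\Sigma$, hence separated from $\Sigma$ by a distance $2\rho>0$. Working on a short time subinterval on which a single smooth spatial cutoff $\chi$ can be chosen with $\chi\equiv1$ on a neighborhood of $A_t$ and $\chi\equiv0$ on the $\rho$-neighborhood of $\Phi(t,K)$, I split $u=\nabla^\perp\Delta^{-1}(\chi\omega)+\nabla^\perp\Delta^{-1}((1-\chi)\omega)$. Near $\Phi(t,K)$ the first summand is $\nabla^\perp$ of a function that is harmonic there (since $\chi\omega$ vanishes there), hence $C^\infty$; the second has $(1-\chi)\omega\in C^{m,\gamma}(\mathbb{T}^2)$, so by the Schauder estimate for $\Delta^{-1}$ it lies in $C^{m+1,\gamma}(\mathbb{T}^2)$. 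Thus $u(t,\cdot)\in C^{m+1,\gamma}$ on a fixed neighborhood of $\Phi(t,K)$, with bounds and $t$-continuity of derivatives inherited from the inductive hypothesis. Feeding this into $\frac{d}{dt}\Phi(t,x)=u(t,\Phi(t,x))$ for $x\in K$ — whose trajectory stays in that neighborhood — the theorem on differentiable dependence of ODE solutions on the initial condition (integrate the variational equations; it uses only spatial regularity of $u$ together with continuity in $t$) yields $\Phi(t,\cdot)\in C^{m+1,\gamma'}(K)$ for some $\gamma'\in(0,\gamma]$, and since $\det D\Phi(t,\cdot)\equiv1$ the inverse function theorem upgrades $\Phi(t,\cdot)^{-1}$ to the same class on $\Phi(t,K)$. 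Finally $\omega(t,\cdot)=\omega_0\circ\Phi(t,\cdot)^{-1}$ with $\omega_0\in C^\infty$ gives $\omega(t,\cdot)\in C^{m+1,\gamma'}_{\mathrm{loc}}(\mathbb{T}^2\setminus A_t)$ with the required joint continuity, which is the hypothesis at level $m+1$. Exhausting $\mathbb{T}^2\setminus A$ by compacts and letting $m\to\infty$ shows $\omega(t,\cdot)\in C^\infty(\mathbb{T}^2\setminus A_t)$, as desired.

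I expect the main obstacle to be the bookkeeping inside the inductive step: one must genuinely use that $\Delta^{-1}$ gains a derivative in the interior, so that the part of $\omega$ supported near the still-only-$C^{m,\gamma}$ set $A_t$ contributes, away from $A_t$, merely a locally harmonic (hence smooth) piece, while simultaneously keeping all estimates uniform in $t$ and localized to a tube that moves with the flow — which is exactly what lets the ODE smooth-dependence theorem apply despite $u$ being only continuous (not differentiable) in time and only log-Lipschitz globally. The other ingredients (Yudovich uniqueness, compactness of $\Sigma$, the inverse function theorem) are routine.
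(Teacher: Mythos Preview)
Your argument is correct and follows essentially the same bootstrap as the paper's proof: induct on the regularity index, using the quasi-Lipschitz flow estimate \eqref{eq:flow} for the base case $C^{0,\gamma}$, and at each step split $u$ via a spatial cutoff so that Schauder/singular-integral theory gains a derivative on the good piece while the piece supported near $A_t$ contributes a harmonic (hence smooth) term away from $A_t$, then upgrade $\Phi$, $\Phi^{-1}$, and finally $\omega=\omega_0\circ\Phi^{-1}$. The only cosmetic differences are that the paper places the cutoff on the good region (working with a shrinking sequence of open sets $U_k\supset O$) and first extends $u$ to a global $C^{k+1,\alpha}$ field $\tilde u$ before invoking ODE regularity, whereas you localize directly.
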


\begin{proof}
	We may assume that $\V \omega_0 \V_{L^\infty} = 1$. Once we show that $\omega(t,\cdot)$ is smooth away from $\Phi(t,A)$ for $t \in [0,T]$ with some absolute constant $T > 0$ then we may iterate the argument to extend the statement to any finite time moment. 
	
	Take an open set $O$ which is separated from $A$. It suffices to show that for $t \in [0,T]$, there exists some $\alpha > 0$ that $\omega(t,\cdot)$ is uniformly $C^{k,\alpha}$ in $\Phi(t,U_k)$, for any integer $k \ge 0$ and some open set $U_k \supset O$. We deduce this by inducting on $k$. 
	
	For the base case of $k = 0$, take some open set $U_0 \supset \mathrm{cl}(O)$ which is still separated away from $A$. Then we simply write \begin{equation*}
	\begin{split}
	\V\omega(t,\cdot)\V_{C^\alpha(\Phi(t,U_0))} =  \sup_{x, x' \in U_0} \frac{|\omega(t,\Phi(t,x)) - \omega(t,\Phi(t,x'))|}{|\Phi(t,x)-\Phi(t,x')|^\alpha} \le \V \omega_0\V_{C^1(U_0)} \sup_{x,x'\in U_0}  \frac{|x-x'|}{|\Phi(t,x)-\Phi(t,x')|^\alpha}
	\end{split}
	\end{equation*} which is bounded by an absolute constant via the H\"{o}lder estimate \eqref{eq:flow} once we choose $\alpha \le e^{-cT}$ where $c$ is the constant from \eqref{eq:flow}. 
	
	Now we assume that $\omega(t,\cdot)$ is $C^{k,\alpha}$ in $\Phi(t,U_k)$ with some $k \ge 0$, where $U_k \supset \mathrm{cl}(O)$ and $d(U_k,A) > 0$. We first pick some open set $U_{k+1}$ which satisfies \begin{equation*}
	\begin{split}
	U_{k+1} \supset \mathrm{cl}(O), \qquad U_k \supset \mathrm{cl}(U_{k+1}).
	\end{split}
	\end{equation*} In particular, $U_{k+1}$ is separated away from $A$. For each $0\le t \le T$, take a smooth cutoff function $0 \le \chi_t \le 1$
	which equals 1 on $\Phi(t,U_{k+1})$ and vanishes outside of $\Phi(t,U_k)$. Then, for $x \in \Phi(t,U_{k+1})$, with the Biot-Savart kernel $K$, we write \begin{equation*}
	\begin{split}
	u(t,x) = (K*\omega_t)(x) = K*(\chi_t \omega_t )(x) + K*((1-\chi_t)\omega_t)(x).
	\end{split}
	\end{equation*} Regarding the first term, a classical singular integral estimate gives \begin{equation*}
	\begin{split}
	\V K* (\chi_t \omega_t ) \V_{C^{k+1,\alpha}(\mathbb{T}^2)} \le  C \V \nabla^k(\chi_t\omega_t)\V_{C^\alpha(\Phi(t,U_{k}))}.
	\end{split}
	\end{equation*} The second term is indeed $C^\infty$ in $\Phi(t,U_{k+1})$ simply because $K(\cdot)$ is $C^\infty$ away from the origin. Hence we deduce that $u(t,\cdot)$ is uniformly $C^{k+1,\alpha}$ in $\Phi(t,U_{k+1})$. At this point we may extend $u(t,\cdot)$ to be $C^{k+1,\alpha}$ on the entire domain $\mathbb{T}^2$ to obtain $\tilde{u}(t,\cdot)$. Then solving \begin{equation*}
	\begin{split}
	\frac{d}{dt} \tilde{\Phi}(t,x) = \tilde{u}(t,\tilde{\Phi}(t,x)), 
	\end{split}
	\end{equation*} gives that $\tilde{\Phi}(t,x)$ is a $C^{k+1,\alpha}$ flow, which coincides with $\Phi(t,x)$ whenever $x \in \Phi(t,U_{k+1})$ and $ 0 \le t \le T$. This can be done by obtaining an a priori estimate for $\V \tilde{\Phi}(t,\cdot)\V_{C^{k+1,\alpha}}$ and then argue along a (smooth) sequence of approximate solutions.  Note that \begin{equation*}
	\begin{split}
	|\nabla\tilde{\Phi}(t,x)| \ge \exp(-\int_0^t \V \nabla \tilde{u}(\tau,\cdot)\V_{L^\infty} d\tau )
	\end{split}
	\end{equation*} so $\tilde{\Phi}(t,\cdot)$ is invertible and the inverse function theorem gives that $\tilde{\Phi}_t^{-1}(\cdot)$ is also a $C^{k+1,\alpha}$ diffeomorphism of the domain. From \begin{equation*}
	\begin{split}
	\omega(t,z) = \omega_0(\Phi_t^{-1}(z)) = \omega_0(\tilde{\Phi}_t^{-1}(z)), \qquad z \in \Phi(t,U_{k+1}),
	\end{split}
	\end{equation*} differentiating both sides $k+1$ times, on the right hand side we obtain terms which contains up to the $k+1$th derivatives of $\omega_0$ (composed with $\Phi_t^{-1}$) multiplied with some factors of $\Phi_t^{-1}$ also up to the $k+1$th derivatives. Since each such factor is $C^\alpha$, we conclude that $\omega(t,\cdot)$ is $C^{k+1,\alpha}$. This finishes the proof. 
\end{proof}

\section{Open Problems}

In this section we discuss a few interesting open problems.

\subsection{Problem 1: Further Degeneration of Weak Solutions}

We have shown that there are Yudovich solutions of the $2D$ Euler equations on $\mathbb{T}^2$ which are initially in the class $H^1$ but which do not belong to the class $L^\infty( [0,\delta); H^1)$ for any $\delta>0$. 
One could ask whether even worse behavior is possible. In fact, the existing estimates do not rule out the existence of Yudovich solutions with $H^1$ data which leave $W^{1,p}$ for every $p>1$ in finite time. 

\begin{lemma}
	\label{LosingEstimate}
	Let $\omega_0 \in H^1(\mathbb{T}^2)\cap L^\infty(\mathbb{T}^2)$ be mean-zero. Then, the unique Yudovich solution satisfies the following estimate:
	$$|\omega(t)|_{W^{1,p}}\leq |\omega_0|_{H^1}$$
	for every $p\leq q(t),$ with $q(t)$ solving the ODE: $$\frac{d}{dt} q(t)= -Cq(t)^2|\omega_0|_{\infty},\quad q(0)=2$$ for some large universal constant $C$. 
\end{lemma}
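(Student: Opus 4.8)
The plan is to run an $L^p$ energy estimate for $\nabla\omega$ in which the exponent is allowed to decrease in time, so that the borderline logarithmic loss in the velocity gradient estimate is absorbed by shrinking $p$. First I would recall the interpolation-type bound for the singular integral operator $\nabla u = \nabla K * \omega$: for mean-zero $\omega$ one has the Calder\'on--Zygmund estimate $|\nabla u|_{L^p} \le C p |\omega|_{L^p}$ with the constant growing linearly in $p$ as $p \to \infty$, and more precisely $|\nabla u|_{L^p} \le C p\, |\omega|_{L^\infty}^{1-2/p} |\omega|_{L^2}^{2/p}$ (or simply $|\nabla u|_{L^p}\le Cp|\omega|_\infty$ after using boundedness of the domain), which quantifies how the log-Lipschitz failure manifests as a factor of $p$.

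The core computation is to differentiate equation \eqref{eq:Euler}, giving $\pr_t \nabla\omega + u\cdot\nabla(\nabla\omega) = -(\nabla u)^T \nabla\omega$, and then test against $|\nabla\omega|^{p-2}\nabla\omega$. Because the flow is volume-preserving, the transport term drops out and one obtains
\begin{equation*}
\frac{d}{dt} |\nabla\omega|_{L^p}^p \le p\, |\nabla u|_{L^\infty\text{-replacement}}\, |\nabla\omega|_{L^p}^p,
\end{equation*}
but since $\nabla u \notin L^\infty$ we instead keep $p$ finite and use H\"older with the $L^p$ bound on $\nabla u$: the right-hand side is controlled by $|\nabla u|_{L^p}|\nabla\omega|_{L^p}^{p-1}\cdot(\text{something})$. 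The cleaner route is to track $\Psi(t) := |\nabla\omega(t)|_{L^{q(t)}}$ directly, using the identity
\begin{equation*}
\frac{d}{dt}\log\Psi(t) = \frac{q'(t)}{q(t)^2}\Big(\log\Psi(t) - \text{avg of }\log|\nabla\omega|\Big) + \frac{1}{q(t)\Psi^{q}}\int |\nabla\omega|^{q-2}\nabla\omega\cdot\pr_t\nabla\omega,
\end{equation*}
plug in $\pr_t\nabla\omega$ from the equation, bound the forcing term by $C q(t)|\omega_0|_\infty \Psi(t)$ via the $L^q$ singular-integral estimate, and observe that the $q'$-term contributes a controllable (in fact favorable, if we set things up right) correction once $q'(t) = -Cq(t)^2|\omega_0|_\infty$. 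The choice of $q$ is exactly engineered so that the dangerous $Cq|\omega_0|_\infty$ growth rate is cancelled, leaving $\frac{d}{dt}\log\Psi \le 0$, hence $\Psi(t)\le\Psi(0) = |\nabla\omega_0|_{L^2} = |\omega_0|_{H^1}$ (noting $q(0)=2$). Finally, for $p \le q(t)$ one interpolates/uses the bounded domain to get $|\omega(t)|_{W^{1,p}} \le C|\omega(t)|_{W^{1,q(t)}} \le |\omega_0|_{H^1}$, adjusting $C$ into the definition.

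The main obstacle will be making the time-varying-exponent differentiation rigorous: one must justify differentiating $|\nabla\omega(t)|_{L^{q(t)}}$ in $t$ when $q$ itself varies, which formally produces the "$\log|\nabla\omega|$ minus its average" term that is not sign-definite. The honest way around this is to avoid differentiating the norm and instead prove the bound at a fixed target time $t_0$ by a Gronwall argument on $F(t) := \log|\nabla\omega(t)|_{L^{p(t)}}$ for a well-chosen decreasing $p(\cdot)$ with $p(t_0)$ equal to the desired exponent, or alternatively to first establish the estimate for the regularized (smooth) solutions where everything is classical and then pass to the limit using the stability of Yudovich solutions. A secondary technical point is the precise constant in $|\nabla u|_{L^p}\le Cp|\omega|_\infty$ on $\mathbb{T}^2$ for mean-zero $\omega$; this is classical (it is where the "large universal constant $C$" in the statement comes from) but should be cited, e.g. from \cite{MR1245492} or via the explicit kernel in \eqref{eq:BS}. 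Once these points are handled, the ODE comparison is immediate and yields the stated $q(t)$.
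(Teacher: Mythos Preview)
Your overall strategy of letting the Lebesgue exponent decrease in time is exactly right, but the specific tool you propose for controlling the stretching term does not close. After testing the gradient equation you need to bound $\int |\nabla u|\,|\nabla\omega|^{q}$, and you suggest doing this via H\"older together with the Calder\'on--Zygmund bound $|\nabla u|_{L^q}\le Cq|\omega|_\infty$. But H\"older with any finite exponent pair $(s,s')$ yields
\[
\int |\nabla u|\,|\nabla\omega|^{q}\ \le\ |\nabla u|_{L^s}\,|\nabla\omega|_{L^{qs'}}^{q}\ \le\ Cs\,|\omega_0|_\infty\,|\nabla\omega|_{L^{qs'}}^{q},
\]
which involves $|\nabla\omega|$ in the \emph{strictly higher} space $L^{qs'}$, not in $L^q$. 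Letting $s\to\infty$ brings $qs'\to q$ but makes the constant blow up. Hence there is no closed differential inequality for $\Psi(t)=|\nabla\omega(t)|_{L^{q(t)}}$ along this route, and the cancellation you describe between the $q'$-term and a ``$Cq|\omega_0|_\infty$ forcing'' never materializes.

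The missing ingredient is that $\nabla u\in BMO$ with seminorm $\lesssim|\omega_0|_\infty$, so by John--Nirenberg one has exponential integrability $\int_{\mathbb{T}^2} e^{c|\nabla u|}\,dx\le C$. This is the correct substitute for an $L^\infty$ bound. Pairing it with the Young inequality for the conjugate pair $(e^a,\,b\log b)$ gives
\[
\int |\nabla u|\,|\nabla\omega|^{q}\ \lesssim\ \int e^{c|\nabla u|}\ +\ q\int |\nabla\omega|^{q}\log|\nabla\omega|,
\]
and the crucial point is that the right-hand side now contains exactly the same integral $\int|\nabla\omega|^q\log|\nabla\omega|$ that the time derivative of the varying exponent produces. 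This is what makes the choice $q'=-Cq^2|\omega_0|_\infty$ achieve a genuine cancellation. The paper's proof proceeds precisely this way: John--Nirenberg, then the Orlicz--Young inequality, then the Lagrangian computation of $\partial_t(|\nabla\omega|^{p(t)})$ which manufactures the matching $p'\,|\nabla\omega|^p\log|\nabla\omega|$ term. Your identification of the ``$\log|\nabla\omega|$ minus its average'' contribution from the varying exponent is the right instinct; you just need to generate the matching $L\log L$ structure on the forcing side rather than an $L^q$ bound.
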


\begin{proof}
	By the John-Nirenberg lemma, $$\int_{\mathbb{T}^2} e^{c|\nabla u|} dx \le C |\omega|_{\infty} \le C |\omega_0|_{\infty}$$ for some small universal constant $c$.   By a generalized Young's inequality, 
	\begin{equation*}
		\begin{split}
			\Vert \,|\nabla u| |\nabla \omega|^r \,\Vert_{1} \leq r\|e^{|\nabla u|}\|_{1} \cdot \| \,|\nabla \omega|^r \ln|\nabla \omega| \, \|_1.
		\end{split}
	\end{equation*} Now, by passing to the Lagrangian formulation (and suppressing the composition with the flow maps) we see:
	$$\partial_t \nabla\omega=-\nabla u\nabla\omega.$$
	Hence, \begin{equation*}
		\begin{split}
			\partial_t (|\nabla\omega|^{p(t)}) &= p'(t) |\nabla \omega|^{p(t)} \ln |\nabla \omega|+p(t)\partial_t \nabla\omega\cdot\nabla\omega  |\nabla\omega|^{p(t)-2} \\
			&=p'(t) |\nabla \omega|^{p(t)} \ln |\nabla \omega|-p(t) \nabla u \nabla\omega\cdot\nabla\omega |\nabla\omega|^{p-2}.
		\end{split}
	\end{equation*}
	Now upon integrating and using our inequality for $\big\||\nabla u| |\nabla\omega|^r\big\|_{1}$, we have:
	$$\frac{d}{dt} |\nabla\omega|_{p(t)}^{p(t)} \leq p'(t) \int |\nabla \omega|^{p(t)} \ln |\nabla \omega| dx + C  |\omega_0|_{\infty} p(t)^2\int |\nabla\omega|^{p(t)} \ln |\nabla\omega
	|dx.$$  Finally, choosing $$p'(t)= -C|\omega_0|_{L^\infty} p(t)^2,\qquad p(0) = 2,$$ we get
	$$\frac{d}{dt} \left(|\nabla\omega|_{p(t)}^{p(t)} \right)\leq 0.$$
\end{proof}

\begin{conj}
	The bound in Lemma \ref{LosingEstimate} is sharp, in the sense that there exist Yudovich solutions which continuously lose regularity. 
\end{conj}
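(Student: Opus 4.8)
The upper bound needed for ``sharpness'' is Lemma~\ref{LosingEstimate} itself, so the substance of the conjecture is the matching lower bound: to construct a mean-zero $\omega_0\in H^1\cap L^\infty$, smooth away from the origin, for which $\omega(t,\cdot)\notin W^{1,p}$ as soon as $p>p(t)$, with $p(t)\nearrow 2$ as $t\searrow 0$ (having $p(t)\asymp q(t)$ would be ideal, but any such $p(t)$ already constitutes continuous loss of regularity; one then passes from ``a sequence of times'' to ``all times'' by continuity in $t$ of $\int_{|y|>r}|\nabla\omega(t,y)|^2\,dy$, exactly as at the end of the proof of Theorem~\ref{thm:pat}). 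The plan is to run the mechanism of Theorem~\ref{thm:pat} simultaneously at all small scales, rather than at a single scale $N$, and to track the Lebesgue exponent $p$ rather than only $p=2$.

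\textbf{Reduction to a velocity-gradient lower bound.} Along the flow one has $\partial_t\nabla\omega=-\nabla u\,\nabla\omega$, and $|\nabla u|$ is controlled pointwise by $|\omega_0|_\infty\ln(1/r)$ near spatial scale $r$ via Lemma~\ref{lem:Zlatos} --- this is exactly what drives Lemma~\ref{LosingEstimate}. Conversely, suppose one can arrange, for all sufficiently small $r$ and all short times, a region at scale $r$ on which $\omega\equiv\mathrm{const}$ abuts a symmetry axis across a filament compressed by the hyperbolic flow of Lemma~\ref{lem:Zlatos} to angular width $\sim r^{ct|\omega_0|_\infty}$. Then, writing $A(r)$ for the jump amplitude of $\omega_0$ at scale $r$ and performing the area-preserving bookkeeping used throughout the paper, the scale-$r$ contribution to $|\nabla\omega(t,\cdot)|_p^p$ is $\gtrsim r^{1-p}A(r)^p e^{ctp|\omega_0|_\infty\ln(1/r)}$, and summing over dyadic scales with $A(r)$ chosen marginal for $H^1$ (a slowly decaying logarithm, as in \eqref{ini:continumm} with $\alpha$ just above $1/2$) gives $|\nabla\omega(t,\cdot)|_p^p\gtrsim\int_0 r^{\,1-p-ctp|\omega_0|_\infty}(\ln\tfrac1r)^{-\alpha p}\,dr$, which diverges precisely when $p>2/(1+ct|\omega_0|_\infty)$ --- a threshold of the shape $q(t)$. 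Thus the entire problem reduces to the following: find $\omega_0\in H^1\cap L^\infty$ whose Biot--Savart velocity \emph{saturates} the pointwise bound $|\nabla u|\gtrsim|\omega_0|_\infty\ln(1/r)$ near the origin, uniformly down to $r=0$, on a short time interval. The scale-by-scale version of the dichotomy-and-contradiction argument of Theorem~\ref{thm:pat} would be the tool for handling the one failure mode: premature angular squeezing of the driving bulk, and the error term $B_i$ of \eqref{eq:Zlatos} becoming comparable to the main term $Q$.

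\textbf{The main obstacle.} The hard part, and presumably the reason this remains a conjecture, is that this saturation requirement is in genuine tension with membership in $H^1$. To keep $Q(t,\hat x)=\int\tfrac{y_1y_2}{|y|^4}\omega(t,y)\,dy\gtrsim\ln(1/r)$ at scale-$r$ points for all small $r$, one needs vorticity of amplitude $\gtrsim 1$ occupying a fixed angular sector at every scale down to the origin; but then the angular transition of $\omega_0$ near the symmetry axes contributes $\int_0\rho^{-1}\big(\int|\partial_\theta\omega_0|^2\,d\theta\big)\,d\rho=+\infty$, so $\omega_0\notin H^1$, and one cannot compensate by making those transition layers angularly thin since their width is bounded by $2\pi$. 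For $H^1$ data the amplitude must decay --- indeed faster than $(\ln\tfrac1\rho)^{-1/2}$, since a decreasing function that is square-integrable against $\rho^{-1}d\rho$ is necessarily $o((\ln\tfrac1\rho)^{-1/2})$ --- which forces $|\nabla u|$ near the origin to be only $o(\ln(1/r))$ and hence an amplification sub-polynomial in $1/r$; consequently the self-similar datum \eqref{ini:continumm} loses $H^1$ immediately but, by this reasoning, remains in every $W^{1,p}$ with $p<2$. Overcoming this would require an essentially new mechanism: for instance a genuinely time-dependent ``driver'' whose support migrates toward the origin, so that the effective logarithm is generated dynamically while the $H^1$ bound of Lemma~\ref{LosingEstimate} stays intact but the finer $L^p$-integrability of $\nabla\omega$ for $p<2$ is destroyed; or a source of gradient growth escaping the leading hyperbolic term of Lemma~\ref{lem:Zlatos}. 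Deciding whether such a mechanism exists is exactly the sharpness question for Lemma~\ref{LosingEstimate}. When a boundary is present the dichotomy disappears (the analogue of Case~I in the proof of Theorem~\ref{thm:pat} cannot occur), which is why continuous-in-time loss of $H^1$ is already accessible there, as announced in \cite{EJ}; but the $H^1$-versus-saturation tension above appears insensitive to the boundary, so the conjecture on $\mathbb{T}^2$ remains genuinely open.
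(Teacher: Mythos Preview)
The statement is a \emph{conjecture}, listed in the paper's ``Open Problems'' section; the paper offers no proof and in fact remarks only that ``proving the conjecture is true (for a short time) is much more difficult on $\mathbb{T}^2$ than on a domain with a boundary.'' There is therefore no proof in the paper to compare your attempt against.

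Your proposal is not a proof either, and you are candid about this: you sketch a strategy (run the Theorem~\ref{thm:pat} mechanism at all scales simultaneously and track the $W^{1,p}$ exponent), then explain in your ``main obstacle'' paragraph why the strategy breaks down, and conclude that the conjecture remains open. That conclusion matches the paper's. Your diagnosis of the obstruction---that saturating $|\nabla u|\gtrsim \ln(1/r)$ uniformly down to $r=0$ requires order-one vorticity at every scale, which is incompatible with $\omega_0\in H^1$ because the angular derivative then fails to be square-integrable against $r^{-1}dr$---is a reasonable heuristic explanation for why the self-similar datum \eqref{ini:continumm} cannot be pushed to give continuous $W^{1,p}$ loss, and it is consistent with the paper's one-line comment about the boundary case being easier. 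So there is no discrepancy to report: both you and the authors leave the statement unproved, and your write-up is essentially an expanded version of the authors' remark on why it is hard.
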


It seems that proving the conjecture is true (for a short time) is much more difficult on $\mathbb{T}^2$ than on a domain with a boundary.

\subsection{Problem 2: Ill-posedness in $W^{2,1}?$}

Though there have recently been numerous results on ill-posedness for the Euler equations in critical spaces, it seems as though the case of $W^{2,1}$ vorticity (or $W^{3,1}$ velocity) is still open.

\subsection{Problem 3: Vanishing Viscosity}

Consider the $2D$ Euler equations with partial viscosity on $\mathbb{T}\times [0,1]$:
$$\partial_t \omega + u \cdot \nabla\omega = \nu \partial_{x_1x_1}\omega.$$
Notice that we have put viscosity only in the horizontal variable and, hence, we only need the no-slip boundary condition: $u^2=0$ on $x_2=0$ and $x_1=1$.

When $\nu=0$, we see that $H^1$ data can leave $H^1$ initially using a modification of the proof of Theorem \ref{thm:pat}. In fact, it can be shown that $|\partial_{x_1}\omega|_{2}$ becomes infinite. When $\nu>0$ this is no longer possible due to the energy equality:
$$\frac{d}{dt} |\omega|_{2}^2 = -2\nu \int |\partial_{x_1}\omega|^2 dx.$$

There a few regimes where one could study the behavior of the solutions of the partially viscous problem as $\nu\rightarrow 0$. The first regime is when $\nu,t\rightarrow 0$. Depending upon the relative sizes of $\nu$ and $t$, different behaviours can be observed. In particular, one would expect that if $\nu\ll t$ that we could see $H^1$ growth immediately. On the other hand, if $t\ll\nu$ we shouldn't see any growth. Determining the exact dynamics in this regime seems interesting. By the same token, one could consider the inhomogenous problem and study the limit $t\rightarrow \infty$ and $\nu\rightarrow 0$.

\section*{Acknowledgments}

We thank the anonymous referee for numerous suggestions and comments, which have significantly improved the quality of this article.

\end{document}